\newtheorem{theorem}{Theorem}
\newtheorem{claim}{Claim}
\newtheorem{lemma}{Lemma}
\newtheorem{proposition}{Proposition}
\newcommand{\pro}{\text{{\LARGE $\sqcap$}}}
\newtheorem{remark}{Remark}
\title{Asynchronous stochastic approximations with asymptotically biased errors and deep multi-agent learning
}
\author{Arunselvan~Ramaswamy\thanks{\textit{supported by the German Research Foundation (DFG) - 315248657.}}~\texttt{arunr@mail.uni-paderborn.de} \thanks{Department of Electrical Engineering and Information Technology, Universit\"{a}t Paderborn, Paderborn - 33098, Germany.} \and
      Shalabh~Bhatnagar~\texttt{shalabh@iisc.ac.in}\thanks{Department of Computer Science and Automation, Indian Institute of Science, Bangalore - 560012, India.} \and
      Daniel E. Quevedo~\texttt{dquevedo@ieee.org}\thanks{Department of Electrical Engineering and Information Technology, Universit\"{a}t Paderborn, Paderborn - 33098, Germany.} 
      }
\begin{document}
\maketitle
\begin{abstract}
Asynchronous stochastic approximations (SAs) are an important class of model-free algorithms, tools and techniques that are popular in multi-agent
and distributed control scenarios. To counter Bellman's curse of dimensionality, such algorithms are coupled with function approximations. Although the learning/ control problem becomes more tractable, function approximations affect stability and convergence.
In this paper, we present verifiable sufficient conditions for stability and convergence of asynchronous SAs with biased approximation errors. The theory developed herein is used to analyze
Policy Gradient methods
and noisy Value Iteration schemes. 
Specifically, we analyze the asynchronous approximate counterparts of the policy gradient (A2PG)
and value iteration (A2VI) schemes.
It is shown that the stability of these algorithms is unaffected by
biased approximation errors, provided they are
asymptotically bounded. With respect to convergence (of A2VI and A2PG), a relationship between the limiting set and the approximation errors
is established. Finally, experimental results are presented that support the theory.
\end{abstract}

\section{Introduction}\label{sec_introduction}
% Computer Society journal (but not conference!) papers do something unusual
% with the very first section heading (almost always called "Introduction").
% They place it ABOVE the main text! IEEEtran.cls does not automatically do
% this for you, but you can achieve this effect with the provided
% \IEEEraisesectionheading{} command. Note the need to keep any \label that
% is to refer to the section immediately after \section in the above as
% \IEEEraisesectionheading puts \section within a raised box.
In recent years reinforcement learning (RL) algorithms such as Value Iteration, Q-learning
and Policy Gradient methods have witnessed a colossal resurgence. Many of these algorithms are coupled with function approximators to solve many important
problems including, but not limited to, autonomous driving in transportation, 
process optimization in industrial scenarios and
efficient dispersal of health-care services. A neural network with several hidden layers is called a deep neural network (DNN).
RL that uses a DNN for function approximation is called DeepRL. The literature around DeepRL is growing rapidly, for example see
\cite{mnih}, \cite{mniha},  \cite{tamar} and \cite{li17}.

Most modern learning and control problems have continuous state and/ or action spaces. This leads to {\bf Bellman's curse of dimensionality}. To overcome this curse, learning and control algorithms are coupled with function approximation. It is worth noting that the previously mentioned resurgence is partly owing to the effectiveness of DNN in function approximation.  While the problem becomes tractable, the use of function approximation affects stability (almost sure boundedness) and convergence properties of the algorithms. Further, the optimality of the policies found depends on the {\bf approximation errors}. Such issues are not well studied. {\bf The main contribution of this paper is a complete analysis in terms of the influence of function approximation on stability and the limiting set, in a multi-agent setting}.
%An important drawback of using function approximation is that one can only expect to find
%suboptimal solutions. In such cases, it is imperative to completely characterize the behavior of
%DeepRL algorithms, understand the effect of function approximation and provide guarantees on
%convergence and stability (almost sure boundedness of the algorithm).

While the theory behind traditional RL is mature, there have not been many attempts to analyze DeepRL. Munos analyzed the approximate value
and policy iteration algorithms, see \cite{munos} and \cite{munos03}. However the assumptions in \cite{munos} and \cite{munos03} are rather restrictive. 
Ramaswamy and Bhatnagar \cite{ramaswamy2017} studied approximate value iteration methods under significantly weaker assumptions as compared to \cite{munos}. However, \cite{ramaswamy2017} does not consider the multi-agent scenario.
% In
% \cite{ramaswamy2017} sufficient conditions, tailor made for DeepRL applications, were presented that guarantee
% convergence and almost sure boundedness of the iterates. Specifically, they showed that approximate
% value iteration converges to a fixed point of the perturbed Bellman operator. Further,
% they showed that asymptotically 
% bounded-error function approximators have a stabilizing effect on the algorithm. 
In this paper, we present the framework to develop and analyze large-scale multi-agent RL algorithms.
Such algorithms are applicable to industrial process-control, distributed control
of microgrids and decentralized resource allocation systems, among others. \textit{It may be noted that in the setting of distributed control 
and learning, the aforementioned curse of dimensionality problem is particularly pronounced}.

\subsection{Motivation, relevant literature and our contributions}\label{sec_contributions}
The main motivation for this paper is the development of a general framework for learning and control in large-scale multi-agent settings. In a typical multi-agent architecture, the agents involved need to work towards a common goal by cooperating with each other. Each agent may be asynchronously performing updates, i.e., according its own local clock, but not that of other agents. While the agents may act independently, their decisions are based on information from other agents. This information is often shared via wireless communication networks. Bottlenecks in communication resources may lead to (possibly unbounded) delays as well as errors in communication. 
% It is worth noting that in the current work we do not assume the existence of a global clock for implementation.
% 
% In this paper, we provide a complete analysis of the asynchronous approximate value iteration algorithm
% (A2VI). This is a stochastic iterative multi-agent learning algorithm that
% is an adaptation of traditional value iteration to the setting
% of optimal control of large-scale distributed systems and also to large-scale multi-agent learning.
% For the analysis we use the lens of stochastic approximation algorithms and utilize tools developed
% in the field of viability theory. Recall that
% the agents are all assumed to be running their own local clocks and communicate with each other
% using communication channels that are prone to delays and errors. An analysis of 
% asynchronous Q-learning, for this setting, was done by Abounadi, Bertsekas and Borkar in 2002, see \cite{abounadi}.
% However, the analysis in \cite{abounadi} does not 
% account for the use of function approximators, a necessity for many large-scale applications. 
% In our recent work \cite{ramaswamy2017}, an analysis is provided
% for value iterations with function approximations; however \cite{ramaswamy2017}
% does not account for multi-agent distributed scenarios. In the current paper
% we provide a complete analysis (stability and convergence) of the previously described A2VI. 

Here, we focus on developing a framework which accounts for all of the above constraints. Such a framework would then guide the
development of algorithms with behavioural guarantees. For this, we build on tools and techniques developed in \cite{Borkar_asynchronous}, \cite{abounadi}, \cite{perkins}, \cite{Benaim96}, \cite{Benaim05}, \cite{ramaswamy2017} and \cite{aubin2012differential}. Traditional analyses in \cite{Borkar_asynchronous} and \cite{abounadi} do not account for the use of function approximation. Whist the more modern analysis in \cite{perkins} can be modified to the multi-agent setting considered herein. However, \cite{perkins} does not consider the important question of {\bf algorithmic stability}. Further, the results of \cite{perkins} do not characterize the limiting set as a function of the approximation errors.

DNN is a popular choice for function approximation among practitioners of RL. They are used to approximate objective functions such as Q-factors, value functions, policies, gradients, etc. Such a DNN is trained in an online manner to minimize its prediction errors.  The network architecture is typically chosen without explicit knowledge of the objective function. Hence, one cannot expect the approximate objective function obtained using a DNN to equal the true objective function everywhere, even after sufficient training. In other words, the approximation errors are likely to be {\bf biased} (have non-zero mean). Further, these biases may affect the stability and quality of convergence, see Remark~\ref{asmp_remark1} for details.

The main contributions of this paper can be summarized as follows:
\begin{enumerate}
\item We show that the stability of the algorithms remains unaffected by error biases, provided error biases do not grow over time.
\item We provide an explicit relation between the biases and the limiting set.
\item Although we consider approximation error biases, there are no additional restrictions on the quality of communication. In other words, we present our results under standard (yet general) assumptions on communication as used, e.g., in \cite{Borkar_asynchronous}.
\item Our theory is used to analyze the asynchronous approximate counterpart of value (A2VI) and policy gradient (A2PG) iterations.
\end{enumerate}

\subsection{Tool set: Asynchronous Stochastic Approximations}
Stochastic approximation algorithms (SAs) encompass a class of iterative algorithms that are model-free and sample-based. SAs find the minimum/maximum of a given objective function through a series of approximations. Traditionally, the approximation errors are expected to vanish over time. In 1951 the first SA was developed by Robbins and Monro \cite{robbins} 
for finding a root of a given regression function. The theory of modern SAs was developed by Bena\"{i}m \cite{Benaim96}, Bena\"{i}m
and Hirsch \cite{BenaimHirsch} and Borkar \cite{Borkar99}. This theory was extended to SAs with set-valued mean-fields
by Bena\"{i}m, Hofbauer and Sorin \cite{Benaim05} \cite{benaim2006stochastic}, Ramaswamy and Bhatnagar \cite{Ramaswamy}, Perkins and Leslie \cite{perkins}, Bianchi et. al. \cite{bianchi2019constant}, and others. The reader is referred to books by
Borkar \cite{BorkarBook} and Kushner and Yin \cite{KushnerYin}
for a more detailed exposition on this topic.

Although the traditional SA framework can be used to develop and analyze algorithms in RL and stochastic control, it does not encompass
multi-agent and distributed scenarios. The latter setting was studied by Borkar \cite{Borkar_asynchronous}. He considered multi-agent algorithms wherein the agents are asynchronous and communications are delayed/ erroneous. Such algorithms are called asynchronous SAs. Many RL
algorithms such as Q-learning, value iteration and policy gradient methods have asynchronous counterparts. These algorithms are designed and analyzed using the theory developed in \cite{Borkar_asynchronous} and \cite{abounadi}. The stability issue of asynchronous SAs was studied by Bhatnagar \cite{Bhatnagar}. 

%In deep multi-agent learning applications, in addition to distributed
%scenarios, one often uses function approximations to allow for large state and action spaces. 
%It is unreasonable to expect these errors to vanish, see Remark~\ref{asmp_remark1} at the end of Section~\ref{sec_assumptions} for details. To this end, we extend the framework of Borkar 
%\cite{Borkar_asynchronous} and Bhatnagar \cite{Bhatnagar} to account for approximation errors that are %possibly non-diminishing and biased. This results in an asynchronous stochastic
%approximation algorithm with asymptotically bounded, and possibly biased, errors. The reader is referred to equation (\ref{asmp_aasaa}) in Section~\ref{sec_assumptions}
%for the recursion. Note that we present assumptions for both stability and convergence of (\ref{asmp_aasaa}). We use the aforementioned extension to analyze the asynchronous
%approximate counterpart of value (A2VI) and policy gradient (A2PG) iterations. These are two simple yet effective reinforcement learning algorithms, which we briefly
%discuss in the next subsection.
\subsection{Value Iteration and Policy Gradient for multi-agent settings} \label{intro_a2}
We are interested in the following adaptation of value iteration for the multi-agent setting:
%It may be noted that many of our notations are from Abounadi, Bertsekas and Borkar \cite{abounadi}. Below we state this adaptation.
\begin{equation}
 \label{intro_a2vi}
 \begin{split}
  &J_{n+1}(i) = J_n(i) + a( \nu (n, i)) I(i \in Y_n) \\ &\left[ (\mathcal{A} T)_i (J_{n - \tau_{1 i}(n)}(1), \ldots, J_{n - \tau_{d i}(n)}(d)) + 
 M_{n+1}(i) \right], n \ge 0.
 \end{split}
\end{equation}
In the above equation,\\
(i) $1 \le i \le d$ is the agent index, and there are $d$ agents in the system. \\
(ii) $J_n := (J_n(1), \ldots, J_n(d))$ is an estimate of the optimal cost-to-go vector at time-step $n$.
\\
(iii) $Y_n$ is a subset of $\{1, 2, \ldots, d\}$ for each $n \ge 0$. It represents the number of agents that are
\textbf{active} at time $n$.\\
(iv) $0 \le \tau_{ji}(n) \le n$ is the (stochastic) delay experienced by agent $i$ in receiving information from agent $j$
at time $n$. In other words, at time $n$, the information obtained by agent $i$ from agent $j$
is $\tau_{ji}(n)$ time-steps old.\\
(v) $\nu(n,i)$ is the number of times that agent $i$ was active (i.e., updated its component parameter) up until time $n$. {\it It may be noted that the time index $n$ in equation (\ref{intro_a2vi}) represents the global clock, and thus, grows unbounded. We analyze the algorithm with respect to this clock.}  Let us say that agent-2 has been updated $34$ times when the global clock has been updated $50$ times. Then we have $\nu(50,2) = 34$.
\\
(vi) $\mathcal{A}$ is the approximation operator (deep neural network), $\{a(n)\}_{n \ge 0}$ is the given step-size sequence and
$\{M_{n+1}\}_{n \ge 0}$ is a Martingale difference noise sequence.

Let us call recursion (\ref{intro_a2vi}) \textit{asynchronous approximate value iteration} (A2VI). If the optimal cost-to-go vector associated with agent-$i$ is $J^*(i)$, then
$J^* = (J^*(1), \dots, J^*(d))$ is the optimal cost-to-go vector associated with the $d$-agent system. The objective is to find $J^*$
in an ``asynchronous'' manner. 
%Although we assume that each agent runs its own local clock,
%we require that the agents perform updates, comparably often, see assumption $(S2)$ in Section~\ref{sec_stability_assumptions}for details. Recall that the agents exchange information with each other in order to achieve a common goal. 

At any step $n$, the information at agent-$i$ from agent-$j$ is $\tau_{ji}(n)$ steps old. The stochastic delay process $\tau$ could be unbounded. However, we make certain standard assumptions on their moments, see $(A2)(v)$ in Section~\ref{sec_delay}. Further,  we assume that the agent-updates are all in the same order of magnitude, asymptotically, see $(S2)$ in Section~\ref{sec_stability_assumptions}. Under these assumptions, we shall show that (\ref{intro_a2vi}) is stable and converges to a fixed point of the perturbed Bellman operator. The reader is referred to Section~\ref{sec_a2vi} for details. 

\textit{It is important to note that we do not distinguish
between stochastic shortest path (with no discounting) and infinite horizon discounted cost problems. Only the definition of the Bellman operator changes accordingly.} The reader is referred to \cite{BertsekasBook} for details on how the Bellman operator changes.

The {\bf policy gradient} algorithm is another important reinforcement learning approach, see \cite{sutton2000}. This method assumes a parameterization
$\theta$ of the policy space $\pi$. Finding an optimal policy amounts to finding a $\hat{\theta}$ that locally minimizes the parameterized policy function $\pi(\cdotp)$.
We are interested in adapting the policy gradient algorithm to the aforementioned multi-agent setting:
\begin{equation}
\label{intro_a2pi}
\begin{split}
 &\theta_{n+1}(i) = \theta_n (i)  - a(\nu(i,n)) I\{ i \in Y_n\} \\ &\left( (\mathcal{A}\nabla_\theta \pi)_i(\theta_{n - \tau_{1i}(n)}(1), \ldots, 
\theta_{n - \tau_{di}(n)}(d))
   + M_{n+1}(i) \right), n \ge 0.
\end{split}
\end{equation}
In the above equation,
$\theta$ is the parameter associated with policy $\pi$ and $\mathcal{A}$ is the approximation of the policy function gradient. There may be a multitude of reasons for using $\mathcal{A}$.
Most important among these is the non-availability of gradients, $\nabla_\theta \pi(\cdotp)$, due to the use of gradient estimators
or the non-differentiability of $\pi$. In the latter case, one may work with the sub-gradient and its approximations instead of gradient itself. Note that a slight visual
inspection reveals the similarity in the forms of recursions (\ref{intro_a2vi}) and (\ref{intro_a2pi}). We call (\ref{intro_a2pi})
\textit{asynchronous approximate policy gradient} (A2PG) algorithm, see Section~\ref{sec_a2pi} for details.
\subsection{Organization}
The organization of the remainder of this paper is as follows:
\begin{itemize}
\item In the following section, we list the definitions and notations used throughout. 
\item In Section~\ref{sec_assumptions} we present the assumptions involved in the analysis of
asynchronous stochastic approximations with asymptotically bounded biased, errors, \textit{i.e.,} recursion (\ref{asmp_aasaa}). 
\item In Sections~\ref{sec_nodelay}, \ref{sec_delay} and \ref{sec_balance}, we present a convergence
analysis of (\ref{asmp_aasaa}) under the assumptions presented in Section~\ref{sec_assumptions}.
The main technical result of this paper, Theorem~\ref{delay_main}, is enunciated in Section~\ref{sec_delay}. This result
is then moulded through the use of Borkar's balanced
step-sizes \cite{Borkar_asynchronous}, into the desired statement in Section~\ref{sec_balance}. 
\item In Section~\ref{sec_stability}, we show that
\textit{the stability of algorithms remains unaffected when the approximation errors are guaranteed to be asymptotically bounded albeit non-diminishing and possibly biased}.
\item In Section~\ref{sec_a2vi} our theory is used to understand the long-term behavior of A2VI. \textit{We show that
A2VI converges to a fixed point of the perturbed Bellman operator, when Borkar's balanced step-sizes are utilized.
We also establish a relationship between these fixed points and the approximation errors.}
\item In Section~\ref{sec_a2pi} we briefly outline a similar analysis for A2PG. \textit{We show that A2PG converges to a small neighborhood of local minima
of the parameterized policy function $\pi(\cdotp)$. This neighborhood is shown to be related to the approximation errors.} 
\item In Section~\ref{sec_exp} we present experimental results to support our theory.
\item In Section~\ref{sec_verify} we discuss the verifiability of assumption $(S5)$. Finally, we summarize our contributions
in Section~\ref{sec_conclusion}.
\end{itemize}
%%%%%%%%%%%%%%%%%%%%%%%%%%%%%%%%%%%%%%%%%%%%%%%%%%%%%%%%%%%%%%%%%%%%%%%%%%%%%%%%%%%%%%%%%%%%%%%%%%%%%%%%%%%%%%%%%%%%%%%%%%%%%%%%%%%%%%%%%%%%%%%%
%%%%%%%%%%%%%%%%%%%%%%%%%%%%%%%%%%%%%%%%%%%%%%%%%%%%%%%%%%%%%%%%%%%%%%%%%%%%%%%%%%%%%%%%%%%%%%%%%%%%%%%%%%%%%%%%%%%%%%%%%%%%%%%%%%%%%%%%%%%%%%%%
%%%%%%%%%%%%%%%%%%%%%%%%%%%%%%%%%%%%%%%%%%%%%%%%%%%%%%%%%%%%%%%%%%%%%%%%%%%%%%%%%%%%%%%%%%%%%%%%%%%%%%%%%%%%%%%%%%%%%%%%%%%%%%%%%%%%%%%%%%%%%%%%
\section{Definitions and Notations} \label{sec_definitions}
{\renewcommand\labelitemi{}
\subsection{General}
\begin{itemize}
\item \textbf{\textbf{[Set closure]}} Given $A \subset \mathbb{R}^d$, then $\overline{A}$ is used to represent the closure of $\mathcal{A}$.
\item \textbf{\textbf{[Limiting set]}} Given $\{ x_n \}_{n \ge 0} \subset \mathbb{R}^d$, its limiting set is given by $\underset{n \ge 0}{\bigcap} \overline{\{x_n \mid n \ge N\}}$.
\item \textbf{\textbf{[Distance between point and set]}} Given $x \in \mathbb{R}^d$ and $A \subseteq \mathbb{R}^d$, 
the distance between $x$ and $A$ is given by:
$d(x, A) : = \inf \{\lVert a- y \rVert \ | \ y \in A\}$.
\item \textbf{\textbf{[$\delta$-open neighborhood ($N^\delta(\cdotp)$)]}}
We define the $\delta$-\textit{open neighborhood}
of $A \subset \mathbb{R}^d$ by $N^\delta (A) := \{x \ |\ d(x,A) < \delta \}$. 
\item \textbf{\textbf{[Balls of radius $r$ $(B_r(0)$ and $\overline{B}_r(0))$]}}
The open ball of radius $r$ around the origin is represented by $B_r(0)$,
while the closed ball is represented by $\overline{B}_r(0)$.
\item \textbf{\textbf{[Projection map]}}
Given $\mathcal{B}$ and $\mathcal{C}$ subsets of $\mathbb{R}^d$, the projection map
$\pro _{\mathcal{B,C}}: \mathbb{R}^d \to \{\text{subsets of }\mathbb{R}^d\}$ is given by:
\[
	\pro _{\mathcal{B,C}}(x) :=  \begin{cases}
	\{x\} \text{, if $x \in \mathcal{C}$}  \\
	\{y \mid d(y, x) = d(x, \overline{\mathcal{B}}), \ y \in \overline{\mathcal{B}} \}  \text{, otherwise}.
	\end{cases}.
\]
\end{itemize}
\subsection{Related to norms and function spaces}
\begin{itemize}
\item \textbf{\textbf{[Euclidean norm ($\lVert \cdotp \rVert$)]}} Given $x \in \mathbb{R}^d$, $\lVert x \rVert$ is used to represent the Euclidean norm of $x$, i.e., $\lVert x \rVert = \sqrt{x_1 ^2 + \ldots + x_d ^2}$.
\item \textbf{[Weighted max-norm ($\lVert \cdotp \rVert_{\nu}$)]} Given $\nu = (\nu_1, \ldots, \nu_d)$ such that
$\nu_i > 0$ for $1 \le i \le d$,
the weighted max-norm of any $x = (x_1, x_2, \ldots, x_d) \in \mathbb{R}^d$ is given by:
$\lVert x \rVert_\nu := \max \left\{ \frac{|x_i|}{\nu_i}  \mid
1 \le i \le d \right\}$. \\
\item \textbf{[Weighted p-norm ($\lVert \cdotp \rVert_{\omega, p}$)]} Given $\omega = (\omega_1, \ldots, \omega_d)$ such that 
$\omega_i > 0$ for $1 \le i \le d$, and $p \ge 1$, the weighted p-norm of any $x \in \mathbb{R}^d$
is defined by:
 $\lVert x \rVert_{\omega, p} := \left( \sum \limits_{i=1}^d \lvert \omega_i x_i \rvert^p \right)^{1/p}.$ 
\item \textbf{\textbf{[Square integrable functions]}} $\mathbb{L}^2([0,T], \mathbb{R}^d)$ is used to represent the set of all square integrable functions with domain $[0,T]$ and range $\mathbb{R}^d$. In other words,
\[
\mathbb{L}^2([0,T], \mathbb{R}^d) = \left\{ f:[0,T] \to \mathbb{R}^d \ \mathlarger{\mathlarger{\mid}} \int_0 ^T \lVert f(t) \rVert ^2 dt < \infty \right\}.
\] 
\item \textbf{\textbf{[C\`{a}dl\`{a}g functions]}} $D([0,T], \mathbb{R}^d)$ is used to represent the set of all C\`{a}dl\`{a}g functions with domain [0,T] and range $\mathbb{R}^d$. This is the set of all functions that are right continuous with left limits.
\item \textbf{\textbf{[Lipschitz continuity]}} A function $f: \mathbb{R}^n \to \mathbb{R}^m$ is Lipschitz continuous {\it iff} $\exists L > 0$ such that $\forall x, y \in \mathbb{R}^n$ $\lVert f(x) - f(y) \rVert \le L \lVert x - y \rVert$.
 \item \textbf{\textbf{[Upper-semicontinuous map]}} We say that a set-valued map $H$ is upper-semicontinuous,
  if for given sequences $\{ x_{n} \}_{n \ge 1}$ (in $\mathbb{R}^{n}$) and 
  $\{ y_{n} \}_{n \ge 1}$ (in $\mathbb{R}^{m}$)  such that
  $x_{n} \to x$, $y_{n} \to y$ and $y_{n} \in H(x_{n})$, $n \ge 1$, 
  we have $y \in H(x)$.
  \end{itemize}
  \subsection{Related to differential inclusions}
  \begin{itemize}
\item
\textbf{\textbf{[Marchaud Map]}} A set-valued map $H: \mathbb{R}^n \to \{\text{subsets of }\mathbb{R}^m$\} 
is called \textit{Marchaud} if it satisfies
the following properties:
 \textbf{(i)} for each $x$ $\in \mathbb{R}^{n}$, $H(x)$ is a convex and compact set;
 \textbf{(ii)} \textit{(point-wise boundedness)} for each $x \in \mathbb{R}^{n}$,  
 $\underset{w \in H(x)}{\sup}$ $\lVert w \rVert$
 $< K \left( 1 + \lVert x \rVert \right)$ for some $K > 0$;
 \textbf{(iii)} $H$ is \textit{upper-semicontinuous}.
 
 Let $H$ be a Marchaud map on $\mathbb{R}^d$.
The differential inclusion (DI) given by
\begin{equation} \label{di}
\dot{x} \ \in \ H(x)
\end{equation}
is guaranteed to have at least one solution that is absolutely continuous. 
The reader is referred to \cite{aubin2012differential} for more details.
We say that $\textbf{x} \in \sum$ if $\textbf{x}$ 
is an absolutely continuous map that satisfies (\ref{di}).
The \textit{set-valued semiflow}
$\Phi$ associated with (\ref{di}) is defined on $[0, + \infty) \times \mathbb{R}^d$ as: \\
$\Phi_t(x) = \{\textbf{x}(t) \ | \ \textbf{x} \in \sum , \textbf{x}(0) = x \}$.\\ For
$B \times M \subset [0, + \infty) \times \mathbb{R}^d$, we define
\begin{equation}\nonumber
 \Phi_B(M) = \underset{t\in B,\ x \in M}{\bigcup} \Phi_t (x).
\end{equation}
\item \textbf{\textbf{[Invariant set]}}
$M \subseteq \mathbb{R}^d$ is \textit{invariant} for the DI \eqref{di} if for every $x \in M$ there exists 
a trajectory, $\textbf{x} \in \sum$, such that for $\textbf{x}(0) = x (\in M)$, $\textbf{x}(t) \in M$,
for all $t > 0$. Note that the definition of invariant set used in this paper, is the same as
that of positive invariant set in \cite{Benaim05} and \cite{BorkarBook}.
\item \textbf{\textbf{[Internally chain transitive set]}}
An invariant set $M \subset \mathbb{R}^{d}$ is said to be
internally chain transitive if $M$ is compact and, for every $x, y \in M$,
$\epsilon >0$ and $T > 0$, we have the following: There exist $n$ and $\Phi^{1}, \ldots, \Phi^{n}$ that
are $n$ solutions to the differential inclusion $\dot{x}(t) \in H(x(t))$,
points $x_1(=x), \ldots, x_{n+1} (=y) \in M$
and $n$ real numbers 
$t_{1}, t_{2}, \ldots, t_{n}$ greater than $T$ such that: $\Phi^i_{t_{i}}(x_i) \in N^\epsilon(x_{i+1})$ and
$\Phi^{i}_{[0, t_{i}]}(x_i) \subset M$ for $1 \le i \le n$. The sequence $(x_{1}(=x), \ldots, x_{n+1}(=y))$
is called an $(\epsilon, T)$ chain in $M$ from $x$ to $y$.
% \item \textbf{\textbf{[Chain recurrent set]}} A set is chain recurrent if the above internally chain transitive property holds 
% for all $x,\ y$ such that $x=y$.
\item \textbf{\textbf{[Attracting set \& fundamental neighborhood]}}
$A \subseteq \mathbb{R}^d$ is \textit{attracting}, if it is compact
and there exists a neighborhood $U$ such that for any $\epsilon > 0$,
$\exists \ T(\epsilon) \ge 0$ with $\Phi_{[T(\epsilon), +\infty)}(U) \subset
N^{\epsilon}(A)$. Such a $U$ is called the \textit{fundamental neighborhood} of $A$. 
The \textit{basin
of attraction } of $A$ is given by $B(A) = \{x \ | \ \underset{t \ge 0}{ \cap} \overline{\Phi_{[t, \infty)}(x)} \subset A\}$.
\item \textbf{\textbf{[Attractor set]}}
In addition to being compact, if the \textit{attracting set} is also invariant, then
it is called an \textit{attractor}.
\item \textbf{\textbf{[Inward directing sets, \cite{ramaswamy2017}]}} Given a differential inclusion $\dot{x}(t) \in H(x(t))$,
an open set $\mathcal{O}$ is said to be an inward directing set with respect to the aforementioned
differential inclusion, if $\Phi_t(x) \subseteq \mathcal{O}$, $t >0$, whenever $x \in \overline{\mathcal{O}}$.
Specifically, if $\mathcal{O}$ is inward directing, then any solution to the DI with starting point at the boundary of $\mathcal{O}$
is ``directed inwards'', into $\mathcal{O}$.
% \item \textbf{\textbf{[Lyapunov stable]}} The above set $A$ is Lyapunov stable 
% if for all $\delta > 0$, $\exists \ \epsilon > 0$ such that
% $\Phi_{[0, +\infty)}(N^\epsilon(A)) \subseteq N^\delta(A)$.
\end{itemize}}
%%%%%%%%%%%%%%%%%%%%%%%%%%%%%%%%%%%%%%%%%%%%%%%%%%%%%%%%%%%%%%%%%%%%%%%%%%%%%%%%%%%%%%%%%%%%%%%%%%%%%%%%%%%%%%
%%%%%%%%%%%%%%%%%%%%%%%%%%%%%%%%%%%%%%%%%%%%%%%%%%%%%%%%%%%%%%%%%%%%%%%%%%%%%%%%%%%%%%%%%%%%%%%%%%%%%%%%%%%%%%
%%%%%%%%%%%%%%%%%%%%%%%%%%%%%%%%%%%%%%%%%%%%%%%%%%%%%%%%%%%%%%%%%%%%%%%%%%%%%%%%%%%%%%%%%%%%%%%%%%%%%%%%%%%%%%
\section{Assumptions for convergence analysis} \label{sec_assumptions}
%As previously stated, A2VI and A2PG can be viewed as asynchronous stochastic approximations with asymptotically bounded, and possibly biased, errors.
%These errors are due to function approximations.
%Further, it is reasonable to expect the approximation errors to be bounded in an asymptotic sense, since function approximators are continuously trained.
%It is worth noting that these errors could be biased random variables (having non-zero means).
%In DeepRL, DNNs
%are used for function approximations due to their effectiveness in approximating a multitude of cost/reward functions. 
%They are used to approximate the Bellman operator, Q-factor and 
%policy gradient, among others.
%\textit{A given DNN cannot be expected to approximate a given objective function with arbitrary precision}.
%However, since a DNN is continuously trained, it is reasonable to expect the approximation errors to diminish, although they may not vanish completely.
%To account for these issues, we present a natural extension of asynchronous stochastic approximations which allows 
%for asymptotically bounded, and possibly biased, approximation errors.
%Below we state the aforementioned extension:
In this paper we are interested in the complete analysis of {\bf asynchronous SAs with non-diminishing biased additive errors}. The general iterative structure of such algorithms is given by:
\begin{equation}
 \label{asmp_aasaa}
 \begin{split}
  &x_{n+1}(i) = x_n(i) + a( \nu (n, i)) I(i \in Y_n) \\ &\left[ (\mathcal{A} f)_i (x_{n - \tau_{1 i}(n)}(1), \ldots, x_{n - \tau_{d i}(n)}(d)) + 
 M_{n+1}(i) \right], \text{ where}
 \end{split}
\end{equation}
\begin{enumerate}
 \item $x_n = (x_n(1), \ldots, x_n(d)) \in \mathbb{R}^d$, $n \ge 0$.
 \item $f: \mathbb{R}^d \to \mathbb{R}^d$ is a Lipschitz continuous objective function.
 %\item $0 \le \tau_{ij}(n) \le n$ is the delay faced by agent $j$ in receiving information from agent $i$
  %at stage $n$. In other words, we allow for unbounded delays.
  %\item $\nu(n, i) := \sum \limits_{m = 0}^n I(i \in Y_m)$ denotes the number of times that agent $i$
  %has updated it's parameter components, \textit{i.e.,} has been active until stage $n$.
 % $Y_n \subseteq \{1, 2, \ldots, d\}$ denotes the subset of agents that are active at stage $n$.
 %\item $\mathcal{A}$ is an approximation operator.
 %\item $\{a(n)\}_{n \ge 0}$ is the given step-size sequence.
 %\item $\{M_{n+1}\}_{n \ge 0}$ is a square integrable Martingale difference sequence, where $M_{n+1} = (M_{n+1}(1), \ldots, M_{n+1}(d))$.
 \end{enumerate}
 The terms $\tau_{ji}(n)$, $Y_n$, $\mathcal{A}$, $\{a(n)\}_{n \ge 0}$ and $M_{n} = (M_n(1), \ldots, M_n (d))$ are as defined for equation \eqref{intro_a2vi} in Section \ref{intro_a2}. It is worth noting that A2VI, \eqref{intro_a2vi}, and A2PG, \eqref{intro_a2pi}, are structurally identical to \eqref{asmp_aasaa}. We first present an analysis of \eqref{asmp_aasaa}. Later, this analysis is transcribed to obtain the desired theory for A2VI and A2PG. Additionally, stronger conclusions are drawn that are specific to A2VI and A2PG. Before proceeding with the analysis, the assumptions involved in the convergence analysis of (\ref{asmp_aasaa}) are listed.
 %Below we present the assumptions used in the analysis of the long-term behavior of (\ref{asmp_aasaa}). These assumptions are adaptations
%of those found in \cite{Borkar_asynchronous}.
\begin{itemize}
 \item[{\bf (A1)}] {\it $f: \mathbb{R}^d \to \mathbb{R}^d$ is a Lipschitz continuous function with Lipschitz constant $L$. Further,
 $\mathcal{A}$ is such that $\mathcal{A}f(x_n) \in f(x_n) + \overline{B}_\epsilon (0)$ for all $n \ge N$, where $N$ may be sample path dependent. Note that $\overline{B}_\epsilon(0)$ is a closed ball of radius $\epsilon$
 centered at the origin. Here $\epsilon > 0$ is a fixed upper bound on the norm of the asymptotic approximation errors.}
 \item[{\bf (A2)}] {\it The step-size sequence $\{a(n)\}_{n \ge 0}$ satisfies the following conditions:}
 \begin{itemize}
  \item[(i)] {\it $\sum \limits_{n \ge 0} a(n) = \infty$ and $\sum \limits_{n \ge 0} a(n) ^2 < \infty$.}
  \item[(ii)] {\it $\limsup \limits_{n \to \infty} \sup \limits_{y \in [x, 1]} \frac{a( \lfloor yn \rfloor)}{a(n)} < \infty$
  for $0 < x \le 1$.}
 \end{itemize}
 \item[{\bf (A3)}] {\it $\frac{n - \tau_{ij}(n)}{n} \to 1$ a.s. for every $1 \le i < j \le d$.}
  \item[{\bf (A4)}] {\it $\sup \limits_{n \ge 0} \lVert x_n \rVert < \infty$ a.s.}
  \item[{\bf (A5)}] {\it $\{M_{n+1}\}_{n \ge 0}$ is a square integrable martingale difference sequence such that}
  \begin{itemize}
   \item[] {\it $E \left[ M_{n+1}(i) \mid \mathcal{F}_n\right] = 0$.}
   \item[] {\it $E \left[ \lVert M_{n+1}(i) \rVert ^2  \mid \mathcal{F}_n\right] \le K(1 + \sup \limits_{m \le n} \lVert x_m \rVert^2 ),$  \text{ for all $n$,} where}
  \end{itemize}
  {\it $\mathcal{F}_n := \sigma \left\langle x_m, M_m, Y_m, \tau_{ij}(m); 1 \le i, j \le d, m \le n \right\rangle$,
	$1 \le i \le d$, $n \ge 0$ and $K >0$ is some fixed constant.}
\end{itemize}
We assume that all the agents are asynchronous. However, if we want the algorithm to learn effectively, then certain causal assumptions are necessary. $(A3)$ is one such assumption. Colloquially, $(A3)$ requires that the information delay between agents at time $n$ is in $o(n)$, where $o(\cdotp)$ is the standard \textit{Little-O} notation. Without loss of generality, we assume that $\tau_{ii}(n) = 0$ for all $i$ and $n$. In other words, we assume that an agent does not experience delays in accessing its own local information.

%In the following section, Section~\ref{sec_convergence}, we present the said analysis assuming stability, \textit{i.e.,} under $(A4)$.
\subsection*{Brief overview of the steps involved in our analysis}
\begin{itemize}
\item In Section \ref{sec_convergence}, convergence properties of \eqref{asmp_aasaa} are analyzed under the almost sure boundedness assumption, i.e., $(A4)$. This analysis is presented in two stages. In the first stage, presented in Section~\ref{sec_nodelay},
it is assumed that $\tau_{ij}(n) = 0$ for all $i,\ j$ and $n$, \textit{i.e.,} {\bf there are no communication delays}.
In the second stage, presented in Section~\ref{sec_delay}, the effect of communication delays is considered. Specifically, it is shown that the errors due to delayed communications do not affect the analysis in Section~\ref{sec_nodelay}.
\item In Section~\ref{sec_stability}, we swap-out $(A4)$ in favor of verifiable conditions which guarantee stability of (\ref{asmp_aasaa}). {\bf We do this by presenting assumptions that imply $(A4)$}. These assumptions are compatible with the conditions listed earlier in this section. Put together, they constitute an analytic framework for studying stability and convergence of \eqref{asmp_aasaa}.
\end{itemize}
\begin{remark}
\label{asmp_remark1}
In typical DeepRL applications, the approximation operator $\mathcal{A}$ is a DNN. The objective function $f$ is typically one among the following: value function, Q-value function, policy function and Bellman operator. The operator $\mathcal{A}$ is trained in an online manner using loss functions that reduce the ``prediction errors''. The neural network architecture is fixed by the experimenter without complete knowledge of $f$. This certainly limits how well the chosen neural network can approximate $f$. In other words, there may not exist a set of network weights such that the approximation errors are arbitrarily small. Hence, it is reasonable to merely hope that the errors do not grow over time. This is codified in $(A1)$ as $\limsup \limits_{n \to \infty} \ \lVert \mathcal{A}f(x_n) - f(x_n) \rVert \le \epsilon$ a.s. for some $\epsilon > 0$.
\end{remark}
%%%%%%%%%%%%%%%%%%%%%%%%%%%%%%%%%%%%%%%%%%%%%%%%
%\begin{remark}
%\label{asmp_remark1}
% As a consequence of $(A1)$, we get that $\sup \limits_{n \ge 0} \ \lVert \mathcal{A}f(x_n) - f(x_n) \rVert \le \epsilon$ a.s. The analysis presented in this
% paper will carry forth, verbatim, even under the weaker assumption that 
 %\begin{equation}
 %\label{asmp_remark1_eq}
   %\limsup \limits_{n \to \infty} \ \lVert \mathcal{A}f(x_n) - f(x_n) \rVert \le \epsilon\ a.s.
 %\end{equation}
 %In many RL applications, deep function approximators are typically trained in an online manner. 
 %Initially they approximate 
 %poorly, but after sufficient training, they exhibit good empirical performance. The weakening of $(A1)$, presented in this remark, is important since it
 %accounts for the aforementioned online training process. This is also an important weakening as compared to traditional literature which requires :
% \[
  %\limsup \limits_{n \to \infty} \ \lVert \mathcal{A}f(x_n) - f(x_n) \rVert = 0\ a.s.
 %\]
 %Importance of the weakening, (\ref{asmp_remark1_eq}), stems from the fact that a function approximator (eg. DNN) cannot be expected to approximate an objective function arbitrarily well.
%\end{remark}
\section{Convergence analysis} \label{sec_convergence}
We are now ready to analyze the convergence of (\ref{asmp_aasaa}) under $(A1)$-$(A5)$. We begin our analysis by making the additional assumption that there are no communication delays, i.e., $\tau_{ji}(n) = 0 \ \forall i, j, n$. This allows us to focus on the effect of asynchronicity between agents. Then, in Section \ref{sec_delay} we show that the analysis in Section \ref{sec_nodelay} is unaffected by the errors due to delayed communications.
\subsection{Analysis with no delays} \label{sec_nodelay}
Assuming $\tau_{ii}(n) = 0$ $\forall i,n$, equation \eqref{asmp_aasaa} becomes:
\begin{equation}
 \label{nodelay_sa}
 \begin{split}
 x_{n+1}(i) &= x_n (i) + a( \nu(n, i)) I(i \in Y_n) \\ &\left[(\mathcal{A}f)_i(x_n(1), \ldots, x_n(d)) + M_{n+1}(i) \right].
 \end{split}
\end{equation}
For $n \ge 0$, define $\overline{a}(n) := \max \limits_{i \in Y_n} a(\nu(n, i))$ and $q(n, i) := \frac{a(\nu(n, i))}{\overline{a}(n)} I(i \in Y_n)$. It can be shown that
$\sum \limits_{n \ge 0} \overline{a}(n)  = \infty$ and $\sum \limits_{n \ge 0} \overline{a}(n)^2  < \infty$.

Equation (\ref{nodelay_sa}) can be further rewritten as follows:
\[
\begin{split}
 x_{n+1}(i) &= x_n(i) + \overline{a}(n) q(n, i) \\ &\left[ f_i(x_n(1), \ldots, x_n(d)) + \epsilon_n(i) + M_{n+1}(i)\right],
 \end{split}
\]
where, $\epsilon_n = (\epsilon_n(1), \ldots, \epsilon_n(d))$ is the approximation error at stage $n$, \textit{i.e.,}
$\epsilon_n = \mathcal{A}f(x_n) - f(x_n)$. It follows from $(A1)$ that 
$\limsup \limits_{n \to \infty} \lVert \epsilon_n \rVert  \le \epsilon$ for a certain $\epsilon > 0$ fixed. Without loss of generality, we may say that $\lVert \epsilon_n \rVert \le \epsilon$ for all $n \ge 0$, even though we only have $\lVert \epsilon_n \rVert \le \epsilon$ for all $n \ge N$ (for sample path dependent $N$). This is because we are only interested in the asymptotic behaviour of \eqref{asmp_aasaa}. In other words, $\{x_n\}_{n \ge 0}$ and $\{x_n\}_{n \ge N}$ (subsequence starting at a sample point dependent $N$) have identical asymptotic properties.

%We use $\{ \overline{a}(n)\}_{n \ge 0}$ to define a linearly interpolated trajectory as follows. 
For $n \ge 1$, define $t(0) := 0$,
$t(n) := \sum \limits_{m = 0}^{n -1} \overline{a}(m)$.  For 
$t \in [t(n), t(n+1))$, define $\overline{x}(t) := x_n$, $\lambda(t) := diag(q(n, 1), \ldots, q(n, d))$ and $\overline{\epsilon}(t) = \epsilon_n$ for $t \in [t(n), t(n+1))$. The notation
$diag(a_1, \ldots, a_d)$ is used to denote the diagonal $d \times d$ matrix given by
\[
 \begin{bmatrix} 
    a_1 & 0 & \dots \\
    \vdots & \ddots & \\
    0 &        & a_d 
    \end{bmatrix}.
\]

In the above, $\{\overline{a}(n)\}_{n \ge 0}$ is used to divide the time-axis. The quantity $\sum \limits_{m=0}^n q(m, i)$ captures the fraction of
 time $\left(\sum \limits_{m=0}^n \overline{a}(m) \right)$ that agent $i$ is active. Thus, $q(m, \cdotp)$ captures the relative frequency of the agent updates. For more
 details the reader is referred to Borkar \cite{Borkar_asynchronous}.

Let us recall \eqref{asmp_aasaa} in the following useful form:
\begin{equation}
 \label{nodelay_useful}
 \begin{split}
 x_{n+1} =\ &x_n + \overline{a}(n)\\  &\lambda(t(n)) \left[ f(x_n) + \epsilon_n + M_{n+1}\right].
 \end{split}
\end{equation}
\begin{flushleft}
It follows from $(A4), \ (A5)$ and $\sum \limits_{n=0}^{\infty} \overline{a}(n)^2 < \infty$, that $\sum \limits_{n \ge 0} \lVert \overline{a}(n) M_{n+1} \rVert ^2 < \infty$. In other words,
the quadratic variation process associated with $\xi_n := \sum \limits_{m = 0}^n \overline{a}(m) \lambda(t(m)) M_{m+1}$, $n \ge 0$, is 
bounded almost surely. From this we may conclude that the martingale noise sequence, $\{\xi_n\}_{n \ge 0}$, is convergent almost surely. For a proof of the 
aforementioned, the reader is referred to \textit{Chapter 2} of Borkar \cite{BorkarBook}. Given the above, the following lemma is immediate.
\end{flushleft}

\begin{lemma}
 \label{nodelay_noise}
 $\lim \limits_{n \to \infty} \xi_{n} < \infty$ a.s., where \\ $\xi_n = \sum \limits_{m = 0}^n \overline{a}(m) \lambda(t(m)) M_{m+1}$.
 In other words, the martingale difference noise sequence is convergent.
\end{lemma}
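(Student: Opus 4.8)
The plan is to recognize $\{\xi_n\}_{n\ge 0}$ as a zero-mean martingale with respect to the filtration $\{\mathcal{F}_n\}_{n\ge 0}$ and to show that its quadratic variation is almost surely summable; the claim then follows from the standard martingale convergence theorem, as recalled in Chapter~2 of \cite{BorkarBook}.

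First I would verify the martingale property. Since $x_m$, $M_m$, $Y_m$ and $\tau_{ij}(m)$ for $m \le n$ are all $\mathcal{F}_n$-measurable, both $\overline{a}(n) = \max_{i \in Y_n} a(\nu(n,i))$ and $\lambda(t(n)) = diag(q(n,1), \ldots, q(n,d))$ are $\mathcal{F}_n$-measurable. Hence, using the zero conditional mean in $(A5)$, $E[\overline{a}(n)\lambda(t(n)) M_{n+1} \mid \mathcal{F}_n] = \overline{a}(n)\lambda(t(n)) E[M_{n+1} \mid \mathcal{F}_n] = 0$, so $\{\xi_n\}_{n\ge0}$ is indeed a martingale adapted to $\{\mathcal{F}_n\}_{n\ge0}$, with increments $\Delta\xi_n := \xi_{n+1} - \xi_n = \overline{a}(n)\lambda(t(n))M_{n+1}$.

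Next I would bound the conditional second moments of the increments. By construction $q(n,i) = \frac{a(\nu(n,i))}{\overline{a}(n)} I(i \in Y_n) \in [0,1]$, so $\lambda(t(n))$ has operator norm at most $1$ and $\lVert \Delta\xi_n \rVert^2 \le \overline{a}(n)^2 \lVert M_{n+1} \rVert^2$. Taking conditional expectations and invoking first $(A5)$ and then $(A4)$, on the almost sure event $\{\sup_n \lVert x_n \rVert < \infty\}$ we obtain $E[\lVert \Delta\xi_n \rVert^2 \mid \mathcal{F}_n] \le K\overline{a}(n)^2 (1 + \sup_{m \le n} \lVert x_m \rVert^2) \le K(1 + C^2)\overline{a}(n)^2$ for a finite, path-dependent constant $C$. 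Since $\sum_{n\ge0} \overline{a}(n)^2 < \infty$ (as already noted above), it follows that $\sum_{n\ge0} E[\lVert \Delta\xi_n \rVert^2 \mid \mathcal{F}_n] < \infty$ almost surely, i.e., the quadratic variation process of $\{\xi_n\}$ is almost surely bounded.

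Finally, the convergence theorem for square-integrable martingales with almost surely finite quadratic variation (Chapter~2 of \cite{BorkarBook}) gives that $\lim_{n\to\infty}\xi_n$ exists and is finite almost surely, which is the assertion of the lemma. The only mild subtlety I anticipate is the path-dependence of the bound $C$ inherited from $(A4)$: because summability is established for the conditional quadratic variation rather than through a uniform $L^2$ bound, one should either appeal directly to the form of the convergence theorem phrased in terms of $\sum_n E[\lVert \Delta\xi_n \rVert^2 \mid \mathcal{F}_n] < \infty$ a.s., or localize over the events $\{\sup_n \lVert x_n \rVert \le C\}$ for $C \in \mathbb{N}$ and let $C \to \infty$. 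Apart from this bookkeeping, the argument is entirely routine.
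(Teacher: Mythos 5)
Your proposal is correct and follows essentially the same route as the paper: bound the (conditional) quadratic variation of the martingale $\{\xi_n\}$ via $(A4)$, $(A5)$ and $\sum_n \overline{a}(n)^2 < \infty$, then invoke the martingale convergence theorem from Chapter~2 of \cite{BorkarBook}. The additional care you take with the path-dependent bound (localizing over $\{\sup_n \lVert x_n \rVert \le C\}$) is a detail the paper leaves implicit, but the argument is the same.
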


%We are now ready to define a non-autonomous differential inclusion (DI), whose solutions track our algorithm.
For $s \ge 0$, define
\[
 x^s(t) := \overline{x}(s) + \int \limits_s ^{s+t} \left( \lambda(\tau) f(\overline{x}(\tau)) + \epsilon(\tau) \right) \ d \tau.
\]
Then $x^s (\cdotp)$ is a solution to the non-autonomous DI $\dot{x}(t) \in \lambda(t + s) f(x(t)) + \overline{B}_\epsilon (0)$,
with $\overline{x}(s)$ as its starting point.
% 
% \begin{proof}
%  The proof of this statement can be found in Chapter 2 of \cite{BorkarBook}. Briefly, it follows from $(A4)$ and $(A5)$ that the quadratic
%  variation process associated with $\{\xi_n\}_{n \ge 0}$ is convergent. This in turn leads to the almost sure convergence of $\{\xi_n\}_{n \ge 0}$.
% \end{proof}
It follows from the definitions of $\overline{x}(\cdotp)$, $x^s(\cdotp)$, and from Lemma~\ref{nodelay_noise} that 
\begin{equation}
 \label{nodelay_aa_eq}
\lim \limits_{s \to \infty} \sup \limits_{t \in [s, s+T]} \lVert \overline{x}(t) - x^s(t) \rVert = 0\ a.s.
 \end{equation}
 Therefore, the asymptotic behavior of \eqref{asmp_aasaa} and \eqref{nodelay_useful} can be determined by studying the family of functions given by $\left\{ x^s([0, T]) {\Large \mid} s \ge 0, \ T > 0 \right\}$.
 
For any fixed $T > 0$, the set $\{x^s([0,T]) \mid s \ge 0\}$ can be viewed as a subset of $D([0,T], \mathbb{R}^d)$, equipped
with the Skorohod topology. It follows from the Arzela-Ascoli theorem for $D([0,T], \mathbb{R}^d)$ that the aforementioned subset
is relatively compact. For details on C\`{a}dl\`{a}g spaces, Skorohod topology and the Arzela-Ascoli theorem, the reader is referred to Billingsley \cite{Billingsley}. It now follows
from (\ref{nodelay_aa_eq}) that $\{x^s([0,T]) \mid s \ge 0\}$ and $\{\overline{x}([s, s+T]) \mid s \ge 0\}$ have the same limit points in 
$D([0,T], \mathbb{R}^d)$. Hence, to find any subsequential limit of $\{\overline{x}(s + \cdotp) \mid s \ge 0\}$, we merely need to consider
the corresponding subsequence in $\{x^s([0,T]) \mid s \ge 0\}$. Finally, since $T$ is arbitrary, $\{\overline{x}(s + \cdotp) \mid s \ge 0\}$
is relatively compact in $D([0, \infty), \mathbb{R}^d)$.
\begin{lemma}
 \label{nodelay_limitset}
 Almost surely any limit point of $\{\overline{x}(s + \cdotp) \mid s \ge 0\}$ in $D([0, \infty), \mathbb{R}^d)$ is a solution to the non-autonomous $DI$
 $\dot{x}(t) \in \Lambda(t) f(x(t)) + \overline{B}_\epsilon(0)$, where $\Lambda(\cdotp)$ is a $d \times d$-dimensional diagonal matrix-valued
 measurable function with diagonal entries in $[0,1]$.
\end{lemma}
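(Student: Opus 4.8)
The plan is to run the classical limiting-ODE argument for asynchronous stochastic approximation, but while carrying along the extra bias term $\overline{\epsilon}(\cdot)$ and the time-dependent relative-frequency matrix $\lambda(\cdot)$. Fix a sample point in the full-measure event on which Lemma~\ref{nodelay_noise} holds, $\lVert \epsilon_n \rVert \le \epsilon$ for all $n$, and $\{\overline{x}(s+\cdot)\mid s\ge 0\}$ is relatively compact in $D([0,\infty),\mathbb{R}^d)$. Let $x^*(\cdot)$ be a limit point of $\{\overline{x}(s+\cdot)\}$, say along $s_k \uparrow \infty$. By \eqref{nodelay_aa_eq}, $x^*(\cdot)$ is also the $D([0,\infty),\mathbb{R}^d)$-limit of $x^{s_k}(\cdot)$ along a further subsequence; since the candidate limit will turn out to be continuous, Skorohod convergence upgrades to uniform convergence on every compact interval $[0,T]$, and likewise $\overline{x}(s_k+\cdot)\to x^*(\cdot)$ uniformly on compacts. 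As $f$ is Lipschitz (hence continuous) and $\{\overline{x}(s_k+\cdot)\}$ is uniformly bounded, $f(\overline{x}(s_k+\cdot))\to f(x^*(\cdot))$ uniformly on $[0,T]$, in particular strongly in $\mathbb{L}^2([0,T],\mathbb{R}^d)$.

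Next I would extract limits of the two ``coefficient'' processes. Writing $x^{s_k}(t)=\overline{x}(s_k)+\int_0^t\big(\lambda(s_k+\tau)f(\overline{x}(s_k+\tau))+\overline{\epsilon}(s_k+\tau)\big)\,d\tau$, note that $\tau\mapsto\lambda(s_k+\tau)$ takes values in the compact convex set of diagonal $d\times d$ matrices with diagonal entries in $[0,1]$, and $\tau\mapsto\overline{\epsilon}(s_k+\tau)$ takes values in $\overline{B}_\epsilon(0)$. Both families are bounded in $\mathbb{L}^2([0,T])$, so along a subsequence they converge weakly in $\mathbb{L}^2([0,T])$ to measurable limits $\Lambda(\cdot)$ and $\epsilon^*(\cdot)$; by Mazur's lemma — the relevant pointwise-constraint sets are closed and convex, hence weakly $\mathbb{L}^2$-closed — $\Lambda(t)$ is a.e.\ diagonal with entries in $[0,1]$ and $\epsilon^*(t)\in\overline{B}_\epsilon(0)$ a.e. A diagonal extraction over $T\in\mathbb{N}$ produces such $\Lambda(\cdot)$ and $\epsilon^*(\cdot)$ on all of $[0,\infty)$.

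The key step is to pass to the limit in the integral identity. Split
\[
\int_0^t\lambda(s_k+\tau)f(\overline{x}(s_k+\tau))\,d\tau = \int_0^t\lambda(s_k+\tau)f(x^*(\tau))\,d\tau + \int_0^t\lambda(s_k+\tau)\big(f(\overline{x}(s_k+\tau))-f(x^*(\tau))\big)\,d\tau .
\]
The first term converges to $\int_0^t\Lambda(\tau)f(x^*(\tau))\,d\tau$ because $\lambda(s_k+\cdot)\rightharpoonup\Lambda(\cdot)$ weakly in $\mathbb{L}^2([0,T])$ and $\mathbf{1}_{[0,t]}f(x^*(\cdot))\in\mathbb{L}^2([0,T])$ is an admissible test function; the second term is bounded in norm by $\int_0^T\lVert f(\overline{x}(s_k+\tau))-f(x^*(\tau))\rVert\,d\tau\to 0$, using $\lVert\lambda(\cdot)\rVert\le 1$ and the uniform convergence above. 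Similarly $\int_0^t\overline{\epsilon}(s_k+\tau)\,d\tau\to\int_0^t\epsilon^*(\tau)\,d\tau$ (weak convergence tested against $\mathbf{1}_{[0,t]}$), and $\overline{x}(s_k)\to x^*(0)$. Hence $x^*(t)=x^*(0)+\int_0^t\big(\Lambda(\tau)f(x^*(\tau))+\epsilon^*(\tau)\big)\,d\tau$ for all $t\ge0$, so $x^*$ is absolutely continuous (this justifies a posteriori the continuity assumed above) and, differentiating for a.e.\ $t$, $\dot{x}^*(t)=\Lambda(t)f(x^*(t))+\epsilon^*(t)\in\Lambda(t)f(x^*(t))+\overline{B}_\epsilon(0)$, which is the assertion. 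I expect the main obstacle to be exactly this passage to the limit in the bilinear term $\lambda(s_k+\cdot)\,f(\overline{x}(s_k+\cdot))$: one cannot take weak limits of a product, and the device of splitting into a ``weak $\times$ fixed-test-function'' piece plus a ``uniformly small'' piece — the latter relying crucially on the Lipschitz continuity of $f$ together with $(A4)$ — is what makes the argument work.
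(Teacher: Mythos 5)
Your proposal is correct and follows essentially the same route as the paper: the paper likewise extracts weak $\mathbb{L}^2$ (equivalently, its topology on $\mathcal{V}$) subsequential limits of the relative-frequency process $\lambda(\cdot)$ and the error process, and passes to the limit in the integral representation, deferring the details of the bilinear term to Borkar's Theorem 2, Chapter 7, where the same ``weak limit against a fixed test function plus uniformly small remainder'' device appears. The only cosmetic discrepancy is that the paper tracks the error as $\hat{\epsilon}_s(t)=\lambda(t)\overline{\epsilon}(t)$ (matching the recursion \eqref{nodelay_useful}) rather than $\overline{\epsilon}(t)$ alone, but since $\lambda$ has diagonal entries in $[0,1]$ both limits land in $\overline{B}_\epsilon(0)$ and the conclusion is unchanged.
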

\begin{proof}
 As in the proof of \textit{Theorem 2, Chapter 7} of Borkar \cite{BorkarBook}, we view $\lambda(\cdotp)$ as an element
 of $\mathcal{V}$, where $\mathcal{V}$ is the space of measurable maps $y(\cdotp): [0, \infty) \to [0, 1]^d$ with the coarsest topology that renders continuous,
 the maps
 \[
  y(\cdotp) \to \int \limits_0 ^t \langle g(s), y(s) \rangle ds,
 \]
for all $t > 0$, $g(\cdotp) \in L_2 ([0,T], \mathbb{R}^d)$.

Define $\hat{\epsilon}_s (t) := \lambda(t) \overline{\epsilon}(t)$ for all $t \ge 0$. Since $\hat{\epsilon}_s (\cdotp)$ is measurable for every $s \ge 0$
and $\sup \limits_{s \ge 0} \lVert \hat{\epsilon}_s \rVert < \infty$, we obtain that
$\{\hat{\epsilon}_s([0, T]) \mid s \ge 0\}$ is relatively compact in $L_2([0, T], \mathbb{R}^d)$.
If necessary, by choosing
a common subsequence of $\{\hat{\epsilon}_s([0,T]) \mid s \ge 0\}$ and $\{\lambda([s, s+T]) \mid s \ge 0\}$, we can show that any limit
of $\{\overline{x}(s + \cdotp) \mid s \ge 0\}$, in $D([0, T], \mathbb{R}^d)$,
is of the form:
\[
 x(t) = x(0) + \int \limits_0 ^t \Lambda(\tau) f(x(\tau)) d\tau + \int \limits_0 ^t \epsilon(\tau) d \tau 
\]
\[
 \text{\textbf{or}}
\]
\[
  x(t) = x(0) + \int \limits_0 ^t \left[ \Lambda(\tau) f(x(\tau))  +  \epsilon(\tau) \right] d \tau,
\]

where $\epsilon(\cdotp)$ and $\Lambda(\cdotp)$ are the subsequential limits of 
$\{\hat{\epsilon}_s([0,T]) \mid s \ge 0\}$ and $\{\lambda([s, s+T]) \mid s \ge 0\}$ respectively.
Note that $\lVert \epsilon(t) \rVert \le \epsilon$, for $t \ge 0$, and that $\epsilon(\cdotp)$
is the weak limit in $L_2([0, T], \mathbb{R}^d)$, as $s \to \infty$. Also note that $\Lambda(\cdotp)$ is the limit in
$\mathcal{V}$, equipped with the coarsest topology described above.
% The details of the arguments presented above can be found in \textit{Chapter 7} of Borkar \cite{BorkarBook}.
\end{proof}
The above lemma states that algorithm \eqref{asmp_aasaa} tracks a solution to the non-autonomous DI given by
$\dot{x}(t) \in \Lambda(t)f(x(t)) + \overline{B}_\epsilon(0)$. We needed to associate a DI and not an o.d.e. since the algorithm allows for
asymptotically biased approximation errors. The non-autonomous $\Lambda(\cdotp)$ is a consequence of asynchronicity. It is clear from the above Proof of Lemma \ref{nodelay_limitset} that $\Lambda(\cdotp)$ captures the relative
update frequencies of the various agents involved in a limiting sense (proof of Lemma~\ref{nodelay_limitset}).
\subsection{Extension to account for delays} \label{sec_delay}
In this section, we show that the statement of Lemma \ref{nodelay_limitset} is true even when $\tau_{ij}(n) \neq 0$. We present additional assumptions on the step-size sequence and the delay process $\tau$. Under these additional assumptions we show that the analysis in Section \ref{sec_nodelay} remains unaffected by the errors that arise from delayed communications.
Before proceeding, we note that a methodology to deal with the effect of delays separately, was developed by Borkar in 1998, see \cite{Borkar_asynchronous}. We use similar
techniques here. In order to avoid redundancies, we only provide additional details and
a brief outline of the proof.
The reader is
referred to \cite{Borkar_asynchronous} or \cite{BorkarBook} for details. Recall the algorithm under consideration:
\begin{equation}
 \label{delay_aa_eq}
 \begin{split}
  &x_{n+1}(i) = x_n(i) + a( \nu (n, i)) I(i \in Y_n) \\ &\left[ (\mathcal{A} f)_i (x_{n - \tau_{1 i}(n)}(1), \ldots, x_{n - \tau_{d i}(n)}(d)) + 
 M_{n+1}(i) \right].
 \end{split}
\end{equation}
The previously mentioned assumptions are listed below as additional refinements in $(A2)$:
\\
 {\bf (A2)(iii)} $\sup \limits_{n \ge 0} a(n) \le 1$.\\
 {\bf (A2)(iv)} For $m \le n$, we have $a(n) \le \kappa a(m)$, where $\kappa > 0$.\\
 {\bf (A2)(v)} There exists $\eta > 0$ and a non-negative integer-valued random variable $\overline{\tau}$ such that:
	    \begin{itemize}
	     \item[(i)] $a(n) = o(n ^{- \eta})$.
	     \item[(ii)] $\overline{\tau}$ stochastically dominates all $\tau_{kl}(n)$ and satisfies
	     \[
	      E\left[ \overline{\tau}^{1/\eta} \right] < \infty.
	     \]
	    \end{itemize}
In Lemma~\ref{nodelay_limitset}, we showed that (\ref{delay_aa_eq}) tracks a solution to the non-autonomous DI:
\begin{equation}
\label{delay_NADI}
 \dot{x}(t) \in \Lambda(t) f(x(t)) + \overline{B}_\epsilon(0).
\end{equation}
In what follows we outline the proof of why (\ref{delay_aa_eq}) still tracks a solution to (\ref{delay_NADI}) even in the presence of delayed communications. Specifically, it is shown that the ``effect'' due to delays vanishes in the order of the step-sizes.

Let us consider the following quantity:
\[
\begin{split}
 &a(\nu(n, i)) I(i \in Y_n) \\ &\left| f_i(x_{n - \tau_{1i}(n)}(1), \ldots, x_{n - \tau_{di}(n)}(d)) -
 f_i(x_n(1), \ldots, x_n(d))
 \right|.
 \end{split}
\]
There are no error terms due to the approximation operator $\mathcal{A}$, since they are already considered in the analysis presented in
Section~\ref{sec_nodelay}.
Since $f$ is Lipschitz continuous, it is enough to find bounds for the terms
\[
 a(\nu(n, i)) \left| x_n(j) - x_{n - \tau_{ji}(n)}(j) \right| \text{ for every $i$ and $j$.}
\] 
Clearly, the above term is bounded by
\[
 a(\nu(n,i)) \sum \limits_{m = n - \tau_{ji}(n)}^{n - 1} \left| x_{m+1}(j) - x_m(j) \right|.
\]
Using (\ref{delay_aa_eq}) and the Lipschitz property of $f$, we get the following bound:
\[
 a(\nu(n,i)) \sum \limits_{m = n - \tau_{ji}(n)}^{n - 1} C a(m) \le C a(\nu(n,i)) \tau_{ji}(n),
\]
for some constant $C > 0$.
Our task is now reduced to showing that $a(\nu(n,i)) \tau_{ji}(n) = o(1)$, which in turn follows from 
\[
 P(\tau_{ji}(n) > n ^\eta  \ i.o.) = 0.
\]
The above equation follows from $(A2)(v)$ and the Borel-Cantelli lemma. The following theorem is an immediate consequence of the analysis
done hitherto.
\begin{theorem}
 \label{delay_main}
 Under assumptions $(A1)$-$(A5)$, the asynchronous approximation algorithm given by (\ref{asmp_aasaa}) has the same limiting set
 as the non-autonomous DI given by $\dot{x}(t) \in \Lambda(t) f(x(t)) + \overline{B}_\epsilon(0)$, where $\Lambda(t)$ is some matrix-valued measurable process.
 Further, for every $t \ge 0$, $\Lambda(t)$ is a diagonal matrix with entries in $[0,1]$.
\end{theorem}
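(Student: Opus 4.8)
The plan is to assemble the theorem from the two halves of the analysis already in place: the delay-free reduction of Section~\ref{sec_nodelay} and the delay-absorption argument of Section~\ref{sec_delay}. First I would invoke Lemma~\ref{nodelay_limitset}: rewriting (\ref{asmp_aasaa}) in the balanced form (\ref{nodelay_useful}), the interpolated trajectory $\overline{x}(\cdot)$ is shown --- via Lemma~\ref{nodelay_noise} on the martingale noise, the Arzela--Ascoli/Skorohod relative-compactness argument, and the weak-limit extraction in $\mathcal{V}$ and $L^2$ --- to have every limit point in $D([0,\infty),\mathbb{R}^d)$ equal to a solution of $\dot{x}(t)\in\Lambda(t)f(x(t))+\overline{B}_\epsilon(0)$ for some diagonal measurable $\Lambda(\cdot)$ with entries in $[0,1]$. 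This already yields the claimed coincidence of limiting sets when all $\tau_{ij}(n)=0$.

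Next I would show the delayed recursion (\ref{delay_aa_eq}) has the same asymptotic behaviour as (\ref{nodelay_sa}). The key quantity is the per-step discrepancy $a(\nu(n,i))\,|f_i(x_{n-\tau_{1i}(n)}(1),\ldots,x_{n-\tau_{di}(n)}(d))-f_i(x_n(1),\ldots,x_n(d))|$; by Lipschitz continuity of $f$ this is bounded by $C\,a(\nu(n,i))\sum_{m=n-\tau_{ji}(n)}^{n-1}|x_{m+1}(j)-x_m(j)|$, and each increment $|x_{m+1}(j)-x_m(j)|$ is $O(a(m))$ by $(A4)$, $(A5)$ and $(A2)(iii)$, which together with $(A2)(iv)$ gives an overall bound of order $a(\nu(n,i))\,\tau_{ji}(n)$. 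The crucial step is then $a(\nu(n,i))\,\tau_{ji}(n)=o(1)$ a.s.: from $(A2)(v)(i)$ we have $a(n)=o(n^{-\eta})$, while $(A2)(v)(ii)$ and the first Borel--Cantelli lemma give $P(\tau_{ji}(n)>n^\eta\text{ i.o.})=0$, since $P(\overline{\tau}>n^\eta)=P(\overline{\tau}^{1/\eta}>n)$ is summable when $E[\overline{\tau}^{1/\eta}]<\infty$; combined with $\nu(n,i)\le n$ and the monotone comparability of step-sizes from $(A2)(iv)$, the product is $o(1)$ on a full-probability set.

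With the delay term bounded step-by-step by $o(a(\nu(n,i)))$, I would argue it behaves exactly like the noise and approximation-error terms already handled: writing (\ref{delay_aa_eq}) as (\ref{nodelay_useful}) plus an extra additive term $\delta_n$ whose contribution to the interpolated trajectory over any window $[s,s+T]$ vanishes as $s\to\infty$ (each summand is $o(\overline{a}(m))$ and the total elapsed time is a fixed $T$), so that the estimate (\ref{nodelay_aa_eq}) remains valid with $x^s(\cdot)$ unchanged. Hence the limit points of $\{\overline{x}(s+\cdot)\}$ in $D([0,\infty),\mathbb{R}^d)$ are unaffected, Lemma~\ref{nodelay_limitset} applies verbatim, and the limiting set of (\ref{asmp_aasaa}) coincides with that of $\dot{x}(t)\in\Lambda(t)f(x(t))+\overline{B}_\epsilon(0)$.

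I expect the main obstacle to be the accumulation bookkeeping in the last paragraph: showing that the per-step delay discrepancies, although each $o(\overline{a}(m))$, do not pile up over a window of fixed length $T$ into a non-vanishing error. This needs the uniform-in-$m$ control from $(A2)(iv)$, the a.s. eventual bound $\tau_{ji}(n)\le n^\eta$, and a Gr\"onwall step to close the comparison between the delayed and undelayed interpolations; everything else reduces to citing Lemma~\ref{nodelay_noise}, Lemma~\ref{nodelay_limitset}, and the Borel--Cantelli estimate already displayed in Section~\ref{sec_delay}.
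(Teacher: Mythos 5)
Your proposal is correct and follows essentially the same route as the paper: reduce to the delay-free case of Lemma~\ref{nodelay_limitset}, bound the per-step delay discrepancy by $C\,a(\nu(n,i))\,\tau_{ji}(n)$ via Lipschitz continuity of $f$, and kill it with $(A2)(v)$ plus Borel--Cantelli. The accumulation bookkeeping you flag as the main obstacle is exactly the point the paper defers to Borkar \cite{Borkar_asynchronous}, and your resolution (each summand is $o(\overline{a}(m))$ over a window of total length $T$, so the aggregate error is $\sup_{m\ge n}o(1)\cdot T\to 0$) is the standard argument used there.
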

\subsection{Balanced step-size sequences} \label{sec_balance}
A drawback in applying the above theorem to practical applications is the fact that the $DI$ (\ref{delay_NADI}) is non-autonomous. Further, $\Lambda(\cdotp)$
is not exactly known. Borkar \cite{Borkar_asynchronous} solved this problem through the use of  ``balanced step-size sequences''.  Note that a step-size sequence $\{a(\nu(n,i))\}_{n \ge 0, 1 \le i \le d}$ is balanced if there exist $a_{ij} > 0$ for every pair of $i$ and $j$ such that
\[
\lim \limits_{n \to \infty} \frac{\sum \limits_{m=0}^n a(\nu(m,i))}{\sum \limits_{m=0}^n a(\nu(m,j))} = a_{ij}.
\]
Typical diminishing step-size sequences are balanced provided all agents update their parameters with the same frequency. Since in this paper we assume that all agents are updated with the same frequency, diminishing step-size sequences are balanced.

When balanced special step-sizes are used, one obtains $\Lambda(t) = diag(1/d, \ldots, 1/d)$ for all $t \ge 0$, see Theorem 3.2 of \cite{Borkar_asynchronous} for details. 
The tracking DI, (\ref{delay_NADI}), of Theorem~\ref{delay_main} then becomes
\begin{equation}
\label{balance_DI}
\dot{x}(t) \in diag(1/d, \ldots, 1/d) f(x(t)) + \overline{B}_\epsilon (0) .
\end{equation}
As noted in \cite{abounadi}, the qualitative behaviors of $\dot{x}(t) = f(x(t))$ and $\dot{x}(t)$ $=  diag(1/d, \ldots, 1/d) f(x(t))$
are similar since they only differ in scale. Further, it follows from the upper semi-continuity of chain recurrent sets that the long-term behavior of \eqref{balance_DI}  is similar to that of 
$\dot{x}(t) =  diag(1/d, \ldots, 1/d) f(x(t))$ for small enough $\epsilon$. In other words,
the long-term behavior of (\ref{balance_DI}) approximates that of $\dot{x}(t) = f(x(t))$. 

{\it In this section, we have shown that asynchronous SAs with asymptotically bounded biased errors track a solution
to \eqref{balance_DI}, when balanced step-sizes are used.} \footnote{Recall that $\epsilon$ of (\ref{balance_DI}) is the norm-bound on the approximation errors.} 
%%%%%%%%%%%%%%%%%%%%%%%%%%%%%%%%%%%%%%%%%%%%%%%%%%%%%%%%%%%%%%%%%%%%%%%%%%%%%%%%%%%%%%%%%%%%%%%%%%%%%%%%%%%%%%%%%%%%%%%%%%%%%%%%%%%%%%%%%%%%%%%%
%%%%%%%%%%%%%%%%%%%%%%%%%%%%%%%%%%%%%%%%%%%%%%%%%%%%%%%%%%%%%%%%%%%%%%%%%%%%%%%%%%%%%%%%%%%%%%%%%%%%%%%%%%%%%%%%%%%%%%%%%%%%%%%%%%%%%%%%%%%%%%%%
%%%%%%%%%%%%%%%%%%%%%%%%%%%%%%%%%%%%%%%%%%%%%%%%%%%%%%%%%%%%%%%%%%%%%%%%%%%%%%%%%%%%%%%%%%%%%%%%%%%%%%%%%%%%%%%%%%%%%%%%%%%%%%%%%%%%%%%%%%%%%%%%
\section{Stability analysis} \label{sec_stability}
The foregoing analysis required that the iterates be bounded in an almost sure sense. This requirement is hard to ensure when function approximation is used. It is well known that unbounded approximation errors can affect the stability of the algorithm, see \cite{BertsekasBook}. {\it In this section, we present a set of sufficient conditions which ensure the following: suppose the errors are asymptotically bounded and possibly biased, then \eqref{asmp_aasaa} is stable.}

%The use of Deep Neural Networks (DNNs) for function approximations within reinforcement learning has boosted the applicability of classical reinforcement learning algorithms, to solve
%a wider variety of problems effectively. As stated earlier, one problem in using function approximations is that only suboptimal policies may be found.
%\textit{Another problem is that the resulting approximate reinforcement learning or neuro-dynamic programming algorithm can be unstable}. Before using DNNs for function approximations,
%the following question needs to be answered: what are the conditions under which a DeepRL algorithm is still stable? 
%In this section, we show that the stability of the algorithm is unaffected by function approximations, provided the approximation errors are asymptotically bounded.
%Note that these errors could be biased.
%Below are the additional assumptions on the step-size sequence
%that are standard in literature, see \cite{Borkar_asynchronous}, \cite{BorkarBook} and \cite{abounadi}.
\subsection{Stability assumptions}\label{sec_stability_assumptions}
Given $n \ge 0$ and $T > 0$, define
$m_T(n) := \max \{m \mid m \ge n,\ t(m) - t(n) \le T \}$. The goal of this section is to replace the stability assumption, $(A4)$, from Section \ref{sec_assumptions} with verifiable conditions. These will be combined with the other assumptions to provide a complete analysis of stability and convergence.
\begin{itemize}
 \item[{\bf (S1)}] 
	    \begin{itemize}
	     \item[(i)] {\it The step-size sequence is eventually decreasing, i.e., $\exists \ N$ such that $a(n)\ge a(m)$ for all $N \le n \le m$.}
	     \item[(ii)] {\it  $\lim \limits_{n \to \infty} \frac{\sum \limits_{m = 0}^{\lfloor x n \rfloor} a(m)
	     }{\sum \limits_{m = 0}^{n} a(m)} = 1$ uniformly in $x \in [y, 1]$, where $0 < y \le 1$.}
	    \end{itemize}
\item[{\bf (S2)}] \begin{itemize}
             \item[(i)] {\it $\liminf \limits_{n \to \infty} \frac{\nu(n, i)}{n+1} \ge \tau$, for some $\tau > 0$.}
             \item[(ii)] {\it $\lim \limits_{n \to \infty} \frac{\sum \limits_{m = \nu(n,i)}^{\nu(m_T(n), i)} a(m)
             }{\sum \limits_{m = \nu(n, j)}^{\nu(m_T(n), j)} a(m)}$ exists for all $i, j$.}
            \end{itemize}
\item[{\bf (S3)}] \begin{itemize}
	     \item[(i)] {\it For all $n \ge 0$, we have $\lVert M_{n+1} \rVert \le D$ a.s.}
             \item[(ii)] {\it $\lim \limits_{n \to \infty} \sum \limits_{m = n}^{m_T(n)} a(m) M_{l(m) + 1} = 0$, where $\{l(m)\}_{m \ge 0}$ is an increasing
sequence of non-negative integers satisfying $l(m) \ge m$.}
            \end{itemize}
\end{itemize}
{\it Assumption $(S3)$, is a stricter version of $(A5)$. For the analysis in this section, we use $(S3)$ instead of $(A5)$. This is only done for the sake of clarity. Later, the analysis is shown to be true merely assuming $(A5)$.}
\begin{itemize}
\item[{\bf (S4)}] {\it Associated with $\dot{x}(t) = f(x(t))$ 
is a compact set $\Lambda$, a bounded open neighborhood $\mathcal{U}$ 
$\left( \Lambda \subseteq \mathcal{U}\subseteq \mathbb{R}^d \right)$ and a
function $V: \overline{\mathcal{U}} \to \mathbb{R}^+$ such that}
	\begin{itemize}
	\item[$(i)$] {\it $\forall t \ge 0$, $\Phi_t (\mathcal{U}) \subseteq \mathcal{U}$ \textit{i.e.,} $\mathcal{U}$ is strongly positively invariant.}
	\item[$(ii)$] {\it $V^{-1} (0) = \Lambda$.}
	\item[$(iii)$] {\it $V$ is a continuous function such that for all
	$x \in \mathcal{U} \setminus \Lambda$ and $y \in \Phi_t (x)$ we have $V(x) > V(y)$, for any $t > 0$.}
	\end{itemize}
% \item[$(S4b)$] 
% Associated with $\dot{x}(t) = f(x(t))$ is a compact set $\Lambda$, a bounded open neighborhood $\mathcal{U}$ and a
% function $V: \overline{\mathcal{U}} \to \mathbb{R}^+$ such that
% 	\begin{itemize}
% 	\item[(i)] $\forall t \ge 0$ $\Phi_t (\mathcal{U}) \subseteq \mathcal{U}$ \textit{i.e.,} $\mathcal{U}$ is strongly positively invariant.
% 	\item[(ii)] $V^{-1} (0) = \Lambda$.
% 	\item[(iii)] $V$ is an upper semicontinuous function such that for all $x \in \overline{\mathcal{U}} \setminus \Lambda$ and $y \in \Phi_t (x)$ we have $V(x) > V(y)$, where $t > 0$.
% 	\item[(iv)] $V$ is bounded on $\overline{\mathcal{U}}$. In other words, $\underset{x \in \mathcal{U}}{\sup} \lVert V(x) \rVert < \infty$.
% 	\end{itemize}
\item[{\bf (S4a)}] {\it $\hat{\mathbb{A}}$ is the global attractor of $\dot{x}(t) = f(x(t))$.}
\end{itemize}
For our subsequent analysis we need that one among (S4) and (S4a) is satistfied, not both. $(S4)$ and its variant $(S4a)$ are the key to our stability analysis. %Depending on the application at hand, it may be easier to verify one variant as opposed to the other.
The two variants are overlapping yet qualitatively 
different, thereby covering a multitude of scenarios.
Note that the above Lyapunov-based stability conditions are devised based on the ones in \cite{ramaswamy2017}.
%  \item[(ii)] $V: \mathbb{R}^d \to \mathbb{R}^+$ is an
% upper semicontinuous function such that $V(x) > V(y)$ for all $x \in \mathbb{R}^d \setminus \mathcal{A}$, 
% $y \in \Phi_t (x)$ and $t > 0$.
% \item[(iii)] $V(x) \ge V(y)$ for all $x \in \mathcal{A}$, $y \in \Phi_t (x)$ and $t > 0$.

If $(S4)$ is satisfied, then
\textit{Proposition 3.25} of Bena\"{i}m, Hofbauer and Sorin \cite{Benaim05} 
implies that $\dot{x}(t) = f(x(t))$ has an
attractor set $\hat{\mathbb{A}} \subseteq \Lambda$.  It also implies that $V^{-1}([0, r])$
is a fundamental neighborhood of $\hat{\mathbb{A}}$, for small values of $r$. On the other hand, if  $(S4a)$ is satisfied, then
any compact neighborhood of $\hat{\mathbb{A}}$ is a fundamental neighborhood of it. \textbf{\textit{In both cases
we can associate an attractor, $\hat{\mathbb{A}}$, and a fundamental neighborhood, $\overline{\mathcal{N}}$, to $\dot{x}(t) = f(x(t))$}}.

Given $\delta > 0$, $\exists \ \epsilon(\delta) > 0$ such that 
$\dot{x}(t) \in f(x(t)) + \overline{B}_{\epsilon(\delta)}(0)$ has an attractor 
$\mathbb{A} \subseteq N^\delta (\hat{\mathbb{A}})$ 
\textit{with fundamental neighborhood $\overline{\mathcal{N}}$}.
%For a definition of $N^\delta(\cdotp)$ see Section~\ref{sec_definitions}.
This is a consequence
of the upper semicontinuity of attractor sets, see \cite{aubin2012differential} or \cite{Benaim05} for details. It will be shown that \eqref{asmp_aasaa} converges to a neighborhood of a local/global attractor of $\dot{x}(t) = f(x(t))$, such as $\hat{\mathbb{A}}$. Further, this neighborhood depends on the approximation errors. Typically, the experimenter decides on the expected accuracy of the algorithm. This accuracy is quantified by $\delta$. Once this accuracy is fixed, the function approximator (DNN) is trained to control the asymptotic errors to $\epsilon(\delta)$. Then one can show that \eqref{asmp_aasaa} converges to $N^{\delta}(\hat{\mathbb{A}})$.

%We proceed by assuming that a $\delta$ was chosen based on the problem at hand. This automatically imposes
%a norm-bound of $\epsilon$ on the approximation errors, asymptotically speaking. This is because (\ref{asmp_aasaa})
%tracks a solution to $\dot{x}(t) \in f(x(t)) + \overline{B}_\epsilon(0)$ and $\epsilon$ is fixed as a consequence of choosing $\delta$.

Before we proceed, we associate the following Lyapunov function to $\dot{x}(t) \in f(x(t)) + \overline{B}_\epsilon(0)$:
$\tilde{V}: \overline{\mathcal{N}} \to \mathbb{R}_+$ such that
$
 \tilde{V}(x) := \max \left\{ d(y, \mathbb{A}) g(t) \mid y \in \Phi_t(x), t \ge 0 \right\}
$ and
$c \le g(t) \le d$ is a strictly increasing function with $c > 0$. Since $\overline{\mathcal{N}}$ is a fundamental neighborhood of
$\mathbb{A}$, it follows that $\sup \limits_{x \in \overline{\mathcal{N}}} \tilde{V}(x) < \infty$.

The stability analysis requires choosing two bounded open sets, say $\mathcal{B}$ and $\mathcal{C}$, such that $\mathcal{C}$ is inward directing and $\mathbb{A} \subset \mathcal{B} \subset \overline{\mathcal{B}} \subset \mathcal{C}$. Recall that $\mathbb{A}$ is an attractor of $\dot{x}(t) \in f(x(t)) + \overline{B}_\epsilon(0)$
obtained from the definition of $\hat{\mathbb{A}}$ $\left(\text{see }(S4a)\right)$. First, we choose $\mathcal{V}_r$ as $\mathcal{C}$ such that $\overline{\mathcal{V}_r} \subset \mathcal{U}$. This is possible for small values of $r$. 
Next, we choose an open $\mathcal{B}$ such
that $\mathbb{A} \subset \mathcal{B} \subset \overline{\mathcal{B}} \subset \mathcal{C}$. 
This is possible since $\Lambda$ is compact and $\mathcal{C}$ is open.	

The following two propositions are necessary for our stability analysis. The reader is referred to \cite{ramaswamy2017} for their proofs.
\begin{proposition}%[Proposition $1$ of \cite{ramaswamy2017}]
 \label{stability_prop1}
 For any $r < \sup \limits_{u \in \mathcal{N}} \tilde{V}(u)$, the set $\mathcal{V}_r := \{x \mid \tilde{V}(x) < r\}$ is open
 relative to $\overline{\mathcal{N}}$. Further, $\overline{\mathcal{V}}_r = \{x \mid \tilde{V}(x) \le r \}$.
\end{proposition}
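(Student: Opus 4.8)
The plan is to reduce both assertions to a single analytic fact — continuity of $\tilde V$ on the compact set $\overline{\mathcal N}$ — together with a strict-decrease property of $\tilde V$ along solutions of $\dot x(t)\in f(x(t))+\overline{B}_\epsilon(0)$. First I would record that $\tilde V$ is finite on $\overline{\mathcal N}$ (trajectories stay in the bounded set $\overline{\mathcal N}$ and $g\le d$) and that the supremum defining $\tilde V(x)$ is actually a \emph{maximum}. Indeed, if $\tilde V(x)>0$, set $\beta:=\tilde V(x)/(2d)$; since $\mathbb A$ is an attractor with $\overline{\mathcal N}$ a fundamental neighborhood, there is $T$ with $\Phi_{[T,\infty)}(\overline{\mathcal N})\subseteq N^{\beta}(\mathbb A)$, so for $t\ge T$ and $y\in\Phi_t(x)$ we have $d(y,\mathbb A)g(t)\le \beta d<\tilde V(x)$; hence any maximizing sequence is eventually confined to the compact set $[0,T]\times\Phi_{[0,T]}(x)$, and any limit point of it lies in the graph of $\Phi$ by the closed-graph (upper semicontinuity) property of Marchaud semiflows \cite{aubin2012differential}, so the sup is attained (the case $\tilde V(x)=0$ being trivial).

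Next I would prove continuity of $\tilde V$ in two halves. For upper semicontinuity, take $x_n\to x$ with $\tilde V(x_n)\to\ell$; if $\ell>0$, the estimate above (with $\beta:=\ell/(2d)$) confines the maximizer times $t_n$ to a fixed interval $[0,T]$ for all large $n$ and the maximizers $y_n$ to the compact set $\Phi_{[0,T]}(\overline{\mathcal N})$, so a subsequential limit $(t^\star,y^\star)$ satisfies $y^\star\in\Phi_{t^\star}(x)$ by the closed graph of $\Phi$, whence $\ell=d(y^\star,\mathbb A)g(t^\star)\le\tilde V(x)$. For lower semicontinuity, fix a maximizer $(t^\star,y^\star)$ for $x$ realized along a solution $\mathbf x^\star$ with $\mathbf x^\star(0)=x$; since $x\mapsto f(x)+\overline{B}_\epsilon(0)$ is a \emph{Lipschitz} set-valued map (as $f$ is Lipschitz), the solution set depends lower-semicontinuously on the initial condition, so there are solutions $\mathbf x^\star_n$ issuing from $x_n$ with $\mathbf x^\star_n\to\mathbf x^\star$ uniformly on $[0,t^\star]$, giving $\tilde V(x_n)\ge d(\mathbf x^\star_n(t^\star),\mathbb A)g(t^\star)\to\tilde V(x)$. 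Thus $\tilde V$ is continuous on $\overline{\mathcal N}$, so $\mathcal V_r=\tilde V^{-1}([0,r))$ is relatively open in $\overline{\mathcal N}$ and $S_r:=\tilde V^{-1}([0,r])=\{x\mid\tilde V(x)\le r\}$ is relatively closed; since $\overline{\mathcal N}$ is closed in $\mathbb R^d$ and $\mathcal V_r\subseteq\overline{\mathcal N}$, this already yields $\overline{\mathcal V}_r\subseteq S_r$.

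The remaining inclusion $S_r\subseteq\overline{\mathcal V}_r$ is where the real work lies. Let $x\in S_r$; if $\tilde V(x)<r$ then $x\in\mathcal V_r$, so assume $\tilde V(x)=r$ (here $r>0$ is implicit, as $\mathbb A\subset\mathcal V_r$ is needed in the sequel). Fix a solution $\mathbf x$ with $\mathbf x(0)=x$; by positive invariance of $\overline{\mathcal N}$ we have $\mathbf x(s)\in\overline{\mathcal N}$ for all $s\ge0$. For $s>0$, autonomy of the inclusion lets us concatenate the restriction of $\mathbf x$ to $[0,s]$ with any solution issuing from $\mathbf x(s)$, so $\Phi_t(\mathbf x(s))\subseteq\Phi_{t+s}(x)$ for every $t\ge0$, hence $d(y,\mathbb A)g(t+s)\le\tilde V(x)=r$ for all $y\in\Phi_t(\mathbf x(s))$. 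Taking the maximizer $(t^\star,y^\star)$ of $\tilde V(\mathbf x(s))$: if $d(y^\star,\mathbb A)=0$ then $\tilde V(\mathbf x(s))=0<r$, and otherwise strict monotonicity of $g$ gives $\tilde V(\mathbf x(s))=d(y^\star,\mathbb A)g(t^\star)<d(y^\star,\mathbb A)g(t^\star+s)\le r$; either way $\mathbf x(s)\in\mathcal V_r$. Letting $s\downarrow0$ and using absolute continuity of $\mathbf x$ gives $\mathbf x(s)\to x$, so $x\in\overline{\mathcal V}_r$. Hence $\overline{\mathcal V}_r=S_r$, and the hypothesis $r<\sup_{u\in\mathcal N}\tilde V(u)$ is precisely what keeps this identity non-degenerate.

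The step I expect to be the main obstacle is the lower semicontinuity of $\tilde V$: it hinges on continuous — indeed Lipschitz — dependence of the solution set of $\dot x\in f(x)+\overline{B}_\epsilon(0)$ on the initial point, a property that would \emph{fail} for a merely upper-semicontinuous right-hand side and that genuinely exploits that $f$ is Lipschitz. The remaining ingredients (uniform attraction of trajectories to $\mathbb A$ inside $\overline{\mathcal N}$, the closed-graph property of $\Phi$, concatenation of solutions of the autonomous inclusion, and absolute continuity of solutions) are all standard and can be quoted from \cite{aubin2012differential} and \cite{Benaim05}.
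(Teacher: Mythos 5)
The paper itself does not prove this proposition --- it defers to \cite{ramaswamy2017} --- so there is no in-text argument to compare against line by line. Your proof follows what is essentially the intended route and I believe it is correct: the decomposition into (i) attainment of the supremum and upper semicontinuity of $\tilde V$ (giving relative openness of $\mathcal{V}_r$, via the closed graph of $\Phi$ together with the fundamental-neighborhood property $\Phi_{[T,\infty)}(\overline{\mathcal{N}}) \subseteq N^{\beta}(\mathbb{A})$ to confine maximizing times to a compact interval), (ii) lower semicontinuity via Filippov's theorem for the Lipschitz right-hand side $f(\cdot)+\overline{B}_\epsilon(0)$ (giving $\overline{\mathcal{V}}_r \subseteq \{\tilde V \le r\}$), and (iii) the reverse inclusion by flowing forward for a short time $s>0$, concatenating solutions, and exploiting strict monotonicity of $g$, is exactly the mechanism this Lyapunov construction is designed for. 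Your closing remark is also on target: the lower-semicontinuity step is the one place where mere upper semicontinuity of the set-valued map would not suffice, and it genuinely uses that $f$ is Lipschitz.

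Three small caveats, none fatal. First, your attainment and semicontinuity arguments pass $g(t_n) \to g(t^\star)$ to the limit, so you are implicitly assuming $g$ is continuous; the paper only states that $g$ is strictly increasing and bounded between $c$ and $d$, so continuity of $g$ should be recorded as a (harmless, clearly intended) standing hypothesis. Second, in step (iii) you need $\mathbf{x}(s) \in \overline{\mathcal{N}}$ for $\tilde V(\mathbf{x}(s))$, and hence membership in $\mathcal{V}_r$, to be meaningful; you invoke positive invariance of $\overline{\mathcal{N}}$ under the perturbed inclusion, which is part of the construction in \cite{ramaswamy2017} but is not stated in this paper, so it must be cited explicitly rather than taken for granted. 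Third, your observation that $r>0$ is implicitly required (otherwise $\mathcal{V}_r$ is empty while $\{\tilde V \le r\}$ need not be) points to a genuine, if minor, imprecision in the statement itself rather than in your argument.
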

%%%%%%%%%%%
\begin{proposition}%[Proposition $2$ of \cite{ramaswamy2017}]
 \label{stability_prop2}
 $\mathcal{C}$ is an inward directing set associated with $\dot{x}(t) \in f(x(t)) + \overline{B}_\epsilon(0)$.
\end{proposition}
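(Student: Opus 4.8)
\quad The plan is to show that $\mathcal C = \mathcal V_r$ is inward directing by exploiting the strict Lyapunov decrease of $\tilde V$ along solutions of $\dot x(t) \in f(x(t)) + \overline{B}_\epsilon(0)$. By Proposition~\ref{stability_prop1} we have $\overline{\mathcal C} = \overline{\mathcal V}_r = \{x \mid \tilde V(x) \le r\}$, so it suffices to prove: whenever $\tilde V(x) \le r$ and $\mathbf x(\cdot)$ is a solution of the inclusion with $\mathbf x(0) = x$, then $\tilde V(\mathbf x(t)) < r$ for every $t > 0$. Ranging over all such $\mathbf x$ then yields $\Phi_t(x) \subseteq \mathcal V_r = \mathcal C$ for $t > 0$, which is exactly the definition of an inward directing set. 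A preliminary point I would dispatch first is that $\mathbf x(t)$ stays inside the domain $\overline{\mathcal N}$ of $\tilde V$; this follows from the choice $\overline{\mathcal V}_r \subset \mathcal U$ in the construction together with strong positive invariance of $\overline{\mathcal N}$ for the perturbed inclusion, which may be assumed after shrinking $\overline{\mathcal N}$ if necessary.

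First I would establish the weak monotonicity $\tilde V(\mathbf x(t)) \le \tilde V(x)$. Fix $t>0$ and put $z := \mathbf x(t)$. For any $s \ge 0$ and $y \in \Phi_s(z)$, concatenating the segment $\mathbf x|_{[0,t]}$ with a solution from $z$ to $y$ produces a solution from $x$ that reaches $y$ at time $t+s$; hence $y \in \Phi_{t+s}(x)$, so $d(y,\mathbb A)\, g(t+s) \le \tilde V(x)$. Since $g$ is increasing, $g(s) \le g(t+s)$, whence $d(y,\mathbb A)\, g(s) \le \tilde V(x)$, and maximising over $s$ and $y$ gives $\tilde V(z) \le \tilde V(x) \le r$. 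In particular, any $x$ with $\tilde V(x) < r$ is handled immediately.

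The crux is the boundary case $\tilde V(x) = r$, where strictness must be wrung out of the above bound. I would split the maximum defining $\tilde V(z)$ into the ranges $s \ge S$ and $0 \le s \le S$. Since $\mathbb A$ is an attractor with fundamental neighbourhood $\overline{\mathcal N}$ and $z \in \overline{\mathcal N}$, there is $S>0$ with $\Phi_{[S,\infty)}(\overline{\mathcal N}) \subset N^{r/(2d)}(\mathbb A)$; as $g \le d$, this gives $d(y,\mathbb A)\, g(s) \le r/2$ for all $s \ge S$ and $y \in \Phi_s(z)$. For $0 \le s \le S$, the computation above refines to $d(y,\mathbb A)\, g(s) \le r\, g(s)/g(t+s)$, and since $g$ is continuous and strictly increasing, $\gamma := \inf_{0 \le s \le S}\big(g(t+s) - g(s)\big) > 0$, so $g(s)/g(t+s) \le 1 - \gamma/d =: \rho < 1$ on $[0,S]$. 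Combining the two ranges, $\tilde V(z) \le \max\{r/2,\ \rho\, r\} < r$, as required.

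I expect the boundary case to be the genuine obstacle: weak monotonicity alone only delivers $\le r$, and the strict inequality needs both the attractor property (to truncate the relevant time horizon to a compact interval) and the strict monotonicity of the weight $g$ (to furnish a uniform contraction factor $\rho<1$ on that interval). A secondary matter to nail down is that solutions started in $\overline{\mathcal V}_r$ never leave the domain of $\tilde V$, which I would deduce from the invariance arranged in the construction preceding the proposition.
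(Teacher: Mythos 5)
The paper does not prove this proposition; it defers to \cite{ramaswamy2017}, and your argument is essentially the one intended there: weak monotonicity of $\tilde{V}$ along trajectories via concatenation and monotonicity of $g$, then strictness at the boundary $\tilde{V}(x)=r$ by splitting the time horizon into a compact interval (where the ratio $g(s)/g(t+s)$ is uniformly bounded away from $1$) and a tail (where the fundamental-neighbourhood property of $\overline{\mathcal{N}}$ forces $d(y,\mathbb{A})$ small). The two-regime decomposition is exactly the right mechanism and the estimates check out. Two loose ends remain, both of which you flag but neither of which you close: (i) the paper only states that $g$ is strictly increasing with $c\le g\le d$, not that it is continuous, so $\gamma=\inf_{0\le s\le S}\bigl(g(t+s)-g(s)\bigr)>0$ needs either a short compactness argument using one-sided limits of $g$, or simply the observation that one is free to choose $g$ continuous in the construction of $\tilde{V}$; (ii) the claim that $\mathbf{x}(t)$ remains in $\overline{\mathcal{N}}$ cannot be waved through by ``shrinking $\overline{\mathcal{N}}$'', since $(S4)$ only gives strong positive invariance of $\mathcal{U}$ for the unperturbed flow $\dot{x}=f(x)$ --- you need the invariance supplied by the attractor/fundamental-neighbourhood structure of the \emph{perturbed} inclusion (or, more simply, you can apply the bound $d(y,\mathbb{A})\,g(t+s)\le r$ and the tail estimate directly to $y\in\Phi_{t+s}(x)$ with $x\in\overline{\mathcal{N}}$, which sidesteps evaluating anything at $z$ until membership in $\overline{\mathcal{N}}$ has been secured).
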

Traditionally, the stability of algorithms such as \eqref{asmp_aasaa} is ensured by projecting the iterates onto a compact set at every stage. If this set is not carefully chosen, then the algorithm may not converge, or converge to an undesirable set. Using the previously constructed $\mathcal{B}$ and $\mathcal{C}$, we obtain the following {\bf projective counterpart} of \eqref{asmp_aasaa}:
\begin{equation}
 \label{sa_proj_new}
 \hat{x}_{n+1} = z_n \text{ such that } z_n \in \pro _{\mathcal{B,C}}(\tilde{x}_n).\\
\end{equation}
In the above equation, $\hat{x}_{0} = z_0$ such that $z_0 \in \pro _{\mathcal{B,C}}(x_0)$ and $\tilde{x}_{n}(i) = x_n(i) + a( \nu (n, i)) I(i \in Y_n) \left[ (\mathcal{A} f)_i (x_{n - \tau_{1 i}(n)}(1), \ldots, x_{n - \tau_{d i}(n)}(d)) + 
 M_{n+1}(i) \right]$.
From the above set of equations, it is clear that the projective iterates $\{\hat{x}_n\}_{n \ge 0} \subseteq \mathcal{C}$. Since $\mathcal{C}$ is bounded by construction (see above), $\sup \limits_{n \ge 0} \lVert \hat{x}_n \rVert < \infty$ a.s.

The realization of the projective scheme given by \eqref{sa_proj_new} depends on finding sets $\mathcal{B}$ and $\mathcal{C}$. However, from the previous discussion it is clear that $\mathcal{B}$ and $\mathcal{C}$ surely exist but may be unknown. In other words, \eqref{sa_proj_new} is only used in the stability analysis of \eqref{asmp_aasaa} and need not be realizable. Below we state our final stability assumption.
\begin{itemize}
\item[{\bf (S5)}] {\it $\sup \limits_{n \ge N} \lVert x_n - \tilde{x}_n \rVert < \infty$ a.s. for a sample path dependent $N$.}
\end{itemize}
%%%%%%%%%%%%%%%%%%%%%%%%%%%%%%%%%%%%%%%%%%%%%%%%%%%%%%%%%%%%%%%%%%%%%%%%%%%%%%%%%%%%%%%%%%%%%%%%%%%%%%%%%%%%%%%%%%%%%%%%%%%%%%%%%%%%%%%%%%%%%%%%%%
\subsection{The projective counterpart of \eqref{asmp_aasaa}} \label{sec_projective}
In this section we begin the study of \eqref{asmp_aasaa} by analysing its projective counterpart given by \eqref{sa_proj_new}. In the previous section we used $
\hat{x}_n$ to represent the projected iterates, to distinguish from the original iterates generated by \eqref{asmp_aasaa}. {\bf In this and the following couple of sections, we simply use $x_n$ for the projected iterates, instead of $\hat{x}_n$s, to reduce clutter.} We consider the following useful equivalent to \eqref{sa_proj_new}:
\begin{equation} \label{projective_proj1}
\begin{split}
\tilde{x}(n+1) &= x_n + D_n \left[\mathcal{A}f(x_n) + M_{n+1} \right], \\
x_{n+1} &= z_n, \text{ where } z_n \in \pro_{\mathcal{B,C}}(\tilde{x}_{n+1}),
\end{split}
\end{equation}
where $D_n = diag\left(a(\nu(n,1)I(1 \in Y_n), \ldots, a(\nu(n, d))I(d \in Y_n)) \right)$. 
Note that (\ref{projective_proj1}) does not account for delayed communications. However, the modifications neccesary to account delays were presented in Section~\ref{sec_delay} and can be applied here as well. Without loss of generality, assume that $Y_n$ has cardinality one for all $n \ge 0$. This is a useful trick from Abounadi et al., \cite{abounadi}. There is no loss of generality because the agents being updated at time $n$ can be viewed as being updated serially.
In other words, $Y_n = \{ \phi_n\}$ with $\phi_n \in \{1,\ldots, d\}$
for all $n \ge 0$. We may rewrite (\ref{projective_proj1}) as:
\begin{equation}
 \label{projective_proj}
 x_{n+1} = x_n + D_n \left[ f(x_n) + \epsilon_n + M_{n+1}\right] + g_n,
\end{equation}
\begin{flushleft}
 where 
$
g_n = \pro_{\mathcal{B,C}} \left( D_n \left[ f(x_n) + \epsilon_n + M_{n+1}\right] \right)
- \left( D_n \left[ f(x_n) + \epsilon_n + M_{n+1}\right] \right).$ Define $\mu_n := \left[I(\phi_n = 1), \ldots, I(\phi_n = d) \right]$,
$\overline{a}(n, i) := a(\nu(n,i))$, $\hat{a}(n) := \overline{a}(n, \phi_n)$, $t(0) := 0$ and $t(n) := \sum \limits_{m=0}^{n-1} \hat{a}(m)$
for $n \ge 1$.
\end{flushleft} 

Below we define the trajectories necessary for our analysis. It is suggested that the reader skip these definitions and refer back when required.
\begin{enumerate}
 \item[] $\mu(t) := \mu_n$ for $t \in [t(n), t(n+1))$,
 \item[] $D_c(t) := D_n$ for $t \in [t(n), t(n+1))$,
 \item[] $X_c(t) := x_n$ for $t \in [t(n), t(n+1))$,
 \item[] $Y_c(t) := \mathcal{A}f(x_n)$ for $t \in [t(n), t(n+1))$,
 \item[] $G_c(t) := \sum \limits_{m=0}^{n-1} g_m$ for $t \in [t(n), t(n+1))$,
 \item[] $\epsilon_c(t) := \mu_n \epsilon_n$ for $t \in [t(n), t(n+1))$,
 \item[] $X_l(t) := \begin{cases}
	x_n \text{ for $t = t(n)$}  \\
	\left(1 - \frac{t - t_n}{\hat{a}(n)} \right) X_l(t(n)) + \\ \left(\frac{t - t_n}{\hat{a}(n)} \right) X_l(t(n+1))  
	\text{ for $t \in [t(n), t(n+1)),$}
	\end{cases}$
  \item[] $W_l(t) := \begin{cases}
	\sum \limits_{m=0}^{n-1} D_m M_{m+1} \text{ for $t = t(n)$}  \\
	\left(1 - \frac{t - t_n}{\hat{a}(n)} \right) W_l(t(n)) +\\ \left(\frac{t - t_n}{\hat{a}(n)} \right) W_l(t(n+1))  
	\text{ for $t \in [t(n), t(n+1)).$}
	\end{cases}$
\end{enumerate}
We also define the following left-shifted trajectories:
\begin{enumerate}
 \item[] $X_l^n(t) := X_l(t + t(n))$,
 \item[] $X_c^n(t) := X_c(t + t(n))$,
 \item[] $Y_c^n(t) := Y_c(t + t(n))$,
 \item[] $W_l^n(t) := W_l(t + t(n))$,
 \item[] $G_c^n(t) := G_c(t + t(n)) - G_c(t(n))$,
 \item[] $\epsilon_c^n(t) := \epsilon_c (t + t(n))$,
 \item[] $\mu^n(t) := \mu(t + t(n))$,
 \item[] $D_c ^n(t) := D_c(t + t(n))$.
\end{enumerate}
Note that $D^n_c(t) \le 1$ and $\lVert \epsilon_c^n(t) \rVert \le \epsilon$ for all $t \ge 0$ and $n \ge 0$. 
Hence $\{D_c^n([0,T]) \mid n \ge 0\}$ and $\{\epsilon_c^n([0,T]) \mid n \ge 0\}$ are relatively compact in $\mathbb{L}_2([0,T], \mathbb{R}^d)$.

One may view $\{X_l^n([0,T]) \mid n \ge 0\}$ and $\{G_c^n([0,T]) \mid n \ge 0\}$ as subsets of
$D([0,T], \mathbb{R}^d)$ equipped with the Skorohod topology. In Lemma~\ref{projective_rc}, we show that the aforementioned
families of trajectories are relatively compact. As in \textit{Lemma 2} of \cite{ramaswamy2017} we only need to show
that these families are point-wise bounded and that any two discontinuities are separated by at least $\Delta > 0$.
%%%%%%%%%%%%%%%%%%%%%%%%%%%%%%%%%%%%%%%%%%%%%%%%%%%%%%%%%%%%%%%%%%%%%%%%%%%%%%%%%%%%%%%%%%%%%%%%%%%%%%%%%%%%%%%%%%%%%%%%%%%%%%%%%%%%%%%%%%%%%%%%
%%%%%%%%%%%%%%%%%%%%%%%%%%%%%%%%%%%%%%%%%%%%%%%%%%%%%%%%%%%%%%%%%%%%%%%%%%%%%%%%%%%%%%%%%%%%%%%%%%%%%%%%%%%%%%%%%%%%%%%%%%%%%%%%%%%%%%%%%%%%%%%%
%%%%%%%%%%%%%%%%%%%%%%%%%%%%%%%%%%%%%%%%%%%%%%%%%%%%%%%%%%%%%%%%%%%%%%%%%%%%%%%%%%%%%%%%%%%%%%%%%%%%%%%%%%%%%%%%%%%%%%%%%%%%%%%%%%%%%%%%%%%%%%%%
\begin{lemma}
 \label{projective_rc}
 $\{X_l^n([0,T]) \mid n \ge 0\}$ and $\{G_c^n([0,T]) \mid n \ge 0\}$ are relatively compact in 
 $D([0,T], \mathbb{R}^d)$, equipped with the Skorohod topology.
\end{lemma}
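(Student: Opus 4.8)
The plan is to invoke the Arzel\`a--Ascoli characterization of relative compactness in $D([0,T],\mathbb{R}^d)$ under the Skorohod topology (Billingsley \cite{Billingsley}), in the form already used in \emph{Lemma 2} of \cite{ramaswamy2017}: a family in $D([0,T],\mathbb{R}^d)$ is relatively compact provided it is uniformly bounded in the sup-norm and there is a single $\Delta>0$ such that, for each member, the instants at which it fails to be equicontinuous (its ``break points'') are at least $\Delta$ apart, while on every interval containing no break point the members are uniformly Lipschitz and the increments across break points are uniformly bounded. For both $\{X_l^n\}$ and $\{G_c^n\}$ these break points are exactly the (left-shifted) times at which a projection in \eqref{sa_proj_new} actually fires, i.e. the times $t(m+1)$ with $g_m\neq 0$. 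So the proof reduces to (i) a uniform bound and (ii) a uniform lower bound on the spacing of projection events; everything else ((iii) a uniform Lipschitz constant off the break points, and the fact that the steep ``near-jump'' stretches of $X_l^n$ shrink to genuine jumps) will come out of the same estimates.

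For (i): the definition of $\pro_{\mathcal{B,C}}$ forces every projected iterate into the bounded set $\mathcal{C}$ --- either $\tilde x_{n+1}\in\mathcal{C}$ and $x_{n+1}=\tilde x_{n+1}$, or $x_{n+1}\in\overline{\mathcal{B}}\subset\mathcal{C}$ --- so $\sup_n\lVert x_n\rVert<\infty$ and the piecewise-linear interpolation $X_l^n$ is uniformly bounded. Set $\kappa_1:=\sup_{y\in\overline{\mathcal{C}}}\lVert f(y)\rVert+\epsilon+D$, finite by continuity of $f$, by $(A1)$ and by $(S3)(i)$; since $D_n$ has a single nonzero entry, equal to $\hat a(n)\le 1$ by $(A2)(iii)$, every one-step displacement of the unprojected iterate has norm at most $\hat a(n)\kappa_1$. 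Feeding the telescoping identity $\sum_{m=n}^{p-1}g_m=x_p-x_n-\sum_{m=n}^{p-1}D_m\bigl[f(x_m)+\epsilon_m+M_{m+1}\bigr]$, read off from \eqref{projective_proj}, into the definition of $G_c^n$ gives $\lVert G_c^n(t)\rVert\le\mathrm{diam}(\overline{\mathcal{C}})+\kappa_1 T$ for all $t\in[0,T]$, the required uniform bound for $\{G_c^n\}$; the same bookkeeping shows that each correction satisfies $\lVert g_m\rVert\le\mathrm{diam}(\overline{\mathcal{C}})+\kappa_1=:\kappa_2$, a uniform bound on the near-jumps.

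The crux is (ii). Whenever a projection fires at step $m_1$ one has $x_{m_1+1}\in\overline{\mathcal{B}}$, and since $\overline{\mathcal{B}}$ is compact and sits inside the open set $\mathcal{C}$, the quantity $\rho:=\inf\{d(y,\mathbb{R}^d\setminus\mathcal{C}):y\in\overline{\mathcal{B}}\}$ is strictly positive. Up to the next projection step $m_2$ the iterate obeys $x_{k+1}=x_k+D_k[f(x_k)+\epsilon_k+M_{k+1}]$ with no correction, and for a projection to fire again at $m_2$ the pre-projection point $\tilde x_{m_2+1}$ must leave $\mathcal{C}$, hence must be at distance at least $\rho$ from $x_{m_1+1}$; the displacement bound then yields $\rho\le\kappa_1\sum_{k=m_1+1}^{m_2}\hat a(k)=\kappa_1\bigl(t(m_2+1)-t(m_1+1)\bigr)$, so consecutive projection instants are at least $\Delta:=\rho/\kappa_1$ apart. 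The very same bound $\kappa_1$ shows $X_l$ is $\kappa_1$-Lipschitz (and $G_c$ is constant) on any interval $[t(k),t(k+1))$ free of a projection, which is condition (iii); and since $\hat a(k)\to 0$ (by $(A2)(i)$ and $(S2)(i)$) the steep stretches of $X_l^n$ have lengths tending to $0$, so in the Skorohod limit they become honest jumps bounded by $\kappa_2$. With (i)--(iii) in hand the Arzel\`a--Ascoli criterion closes the argument for both families.

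I expect the main obstacle to be a disciplined treatment of $X_l^n$ near a projection: its slope there equals $\lVert g_k\rVert/\hat a(k)$, which is unbounded, so one cannot hope for equicontinuity on all of $[0,T]$ and must work with the Skorohod modulus $w'$, choosing a partition adaptively so that the finitely many (because $\Delta$-separated) projection times are isolated from the partition interiors. A second point requiring care is that $\kappa_1$, $\kappa_2$ and $\Delta$ must be genuinely uniform in $n$ and along the sample path --- this is exactly where $(S3)(i)$ (the a.s.\ bound $\lVert M_{n+1}\rVert\le D$), the step-size bound $(A2)(iii)$, and the fact that every iterate stays in the fixed bounded set $\mathcal{C}$ are used --- and that these constants are independent of $T$ except through the displayed $\kappa_1 T$ term.
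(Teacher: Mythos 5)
Your proposal is correct and follows essentially the same route as the paper: both reduce the claim to the Arzel\`a--Ascoli criterion for $D([0,T],\mathbb{R}^d)$ via (i) pointwise boundedness of the iterates and corrections (using the boundedness of $\mathcal{C}$, $(A2)(iii)$, $(S3)(i)$ and the linear growth of $f+\overline{B}_\epsilon(0)$ on $\overline{\mathcal{C}}$) and (ii) a uniform $\Delta>0$ separating projection events. Your explicit derivation of $\Delta=\rho/\kappa_1$ from the positive distance between $\overline{\mathcal{B}}$ and $\partial\mathcal{C}$ is exactly the argument the paper defers to \emph{Lemma 2} of \cite{ramaswamy2017}, so nothing further is needed.
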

\begin{proof}
 As stated earlier, we only need to show that the aforementioned families of trajectories are point-wise bounded and that any two discontinuities
 are separated by at least $\Delta > 0$. From $(S3)(i)$ we have that $\lVert M_{n+1} \rVert \le D$ a.s. for all $n \ge 0$. Since $f$ is Lipschitz
 continuous, $F(x) := f(x) + \overline{B}_\epsilon (0)$ is Marchaud. 
 Clearly, $\mathcal{A}f(x_n) \in F(x_n)$ for all $n \ge 0$.
 
 We have the following:
 \[
  \sup \limits_{x \in \overline{C}, y \in F(x)} \lVert y \rVert \le C_1 \text{ for some }C_1 > 0 
 \]
\[
  \implies \sup \limits_{n \ge 0}\ \lVert \tilde{x}_{n + 1} - x_n \rVert \le \left( \sup \limits_{n \ge 0} a(n) \right) (C_1 + D)
\]
\[
 \implies \sup \limits_{n \ge 0}\ \lVert g(n) \rVert \le \sup \limits_{n \ge 0} \left(\lVert \tilde{x}_{n + 1} - x_n \rVert + d(x_n, \mathcal{B}) \right)
 \le C_2
\]
for some $0 < C_2 < \infty$ that is independent of $n$. 

Now that the point-wise boundedness property has been proven, it is left to show
that any two discontinuities are separated by some $\Delta >0$. Using arguments similar to the ones found in the proof of 
\textit{Lemma 2} in \cite{ramaswamy2017}, we can show that such a constant is given by
\[
  \Delta = \frac{d }{2\left(D + \sup \limits_{x \in \overline{C}, y \in F(x)} \lVert y \rVert \right)},
\]
where $d$ is the number of agents in the multi-agent system at hand.
\end{proof}
\subsection{Overview of the strategy involved in stability analysis}
Note that $T$ in the above lemma is arbitrary. Hence, the sets $\{X_l^n([0,\infty)) \mid n \ge 0\}$ and $\{G_c^n([0,\infty)) \mid n \ge 0\}$
are also relatively compact in $D([0,\infty), \mathbb{R}^d)$. It follows from $(S3)$ that $\{W_l ^n([0,\infty)) \mid n \ge 0 \}$ is relatively
compact in $D([0,\infty)), \mathbb{R}^d)$, and that all limits equal the constant-$0$-function. If we consider a subsequence $\{m(n)\} \subset \{n\}$ such that $M_{m(n)}$ is convergent, then
$X_l^{m(n)}([0,T])$ and
$
 X_l ^{m(n)}(0) + \int \limits_{0}^T \left( \mu ^{m(n)} (s) f(X_c ^{m(n)} (s)) + \epsilon_c ^{m(n)}(s) \right) ds  + G_c ^{m(n)}(T)$
have identical limits.

Consider a subsequence $\{m(n)\}_{n \ge 0} \subseteq \mathbb{N}$ such that $\{\epsilon_c^{m(n)}([0,T]) \mid n \ge 0\}$
is weakly convergent in $\mathbb{L}_2([0,T])$, and such that $\{X_l^{m(n)}([0,T]) \mid n \ge 0\}$ and $\{G_c ^{m(n)}([0,T]) \mid n \ge 0\}$
are convergent in $D([0,T], \mathbb{R}^d)$. In addition, this subsequence satisfies the condition that $g_{m(n) - 1} = 0$
for all $n \ge 0$. Now, let us suppose that the limit of $\{G_c^{m(n)}([0,T])\}_{n \ge 0}$ is the constant-$0$-function. Using arguments from Section~\ref{sec_nodelay}, we show that the limit of $\{X_l^{m(n)}([0,T]) \mid n \ge 0\}$ is given by:
\[
 X(0) + \int \limits_0^t \left( \lambda(s) f(X(s)) + \epsilon(s) \right) ds,
\]
such that $X(0) \in \overline{\mathcal{C}}$.
Hence, the projective scheme (\ref{projective_proj1}) tracks
a solution to $\dot{x}(t) \in \lambda(t) f(x(t)) + \overline{B}_\epsilon (0)$, where $\lambda(\cdotp)$ is some measurable matrix-valued process
with only diagonal entries. If balanced step-sizes (see \textit{Theorem 3.2} of Borkar \cite{Borkar_asynchronous}) are used, then
(\ref{projective_proj1}) tracks a solution to $\dot{x}(t) \in 1/d\ f(x(t)) + \overline{B}_\epsilon(0)$. The asymptotic behaviors of 
$\dot{x}(t) = f(x(t))$ and $\dot{x}(t) = (1/d)\ f(x(t))$ are similar, \textit{i.e.,} any solution trajectory of both o.d.e.'s, with starting points in
$\overline{\mathcal{C}}$, will converge to the attractor $\hat{\mathbb{A}}$. Consequently, any solution trajectory of 
$\dot{x}(t) \in (1/d)\ f(x(t)) + \overline{B}_\epsilon (0)$ converges to $\mathbb{A}$, provided the starting point is inside $\mathcal{C}$.
Recall that $\mathbb{A}$ is an attractor of $\dot{x}(t) \in f(x(t)) + \overline{B}_{\epsilon}(0)$ 
with fundamental neighborhood $\overline{\mathcal{N}}$ such that $\mathcal{C} \subset \overline{\mathcal{N}}$.
In other words,
the projective scheme (\ref{projective_proj1}) converges to $\mathbb{A}$ almost surely. Stability of
the algorithm under consideration, (\ref{asmp_aasaa}), follows from $(S5)$. 

\noindent
To summarize, 
there are two important steps in proving stability: \\ 
(Step-1) Any limit of $\{X_l^n([0,T])\}_{n \ge 0}$
is of the form
\[
X(t) = X(0) + \int \limits_0 ^t \left(\mu^* f(X(s)) + \epsilon(s) \right) ds + G(t) \text{ for } t \in [0,T],
\]
where $\mu ^* = diag(1/d, \ldots, 1/d)$ and $X(0) \in \overline{C}$.\\
(Step-2) Show that any limit of $\{G_c ^{m(n)}([0,T]) \mid n \ge 0\}$ is the constant $0$ function, provided $g_{m(n) - 1} = 0$ for all $n \ge 0$.

\subsection{Stability theorem}
Define $K := \{n \mid g_{n - 1 } = 0\}$. The premise of the following two lemmas is that balanced step-sizes
(of \textit{Theorem 3.2}, \cite{Borkar_asynchronous}) are used.
\begin{lemma}
 \label{projective_gis0_1}
 Without loss of generality, let $\{\epsilon_c ^n ([0,T])\}_{n \in K}$ be (weakly) convergent in 
 $\mathbb{L}_2 ([0,T], \mathbb{R}^d)$, with weak limit $\epsilon(\cdotp)$. Also let $\{X_l^n([0,T])\}_{n \in K}$ and $\{G_c^n([0,T])\}_{n \in K}$
 be convergent in $D([0,T], \mathbb{R}^d)$ as $n \to \infty$, with limits $X(\cdotp)$
 and $G(\cdotp)$ respectively. Then, for $t \in [0,T]$
  \begin{equation}
  \label{gis0_1_eq}
   \begin{split}
   X_l^n(t) \to X(0) + \int \limits_0 ^t \left(\mu^* f(X(s)) + \epsilon(s) \right) ds + G(t).
    \end{split}
  \end{equation}
\end{lemma}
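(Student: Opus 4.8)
The plan is to run the standard interpolation-and-limit argument of Section~\ref{sec_nodelay} on the linearly interpolated trajectories $X_l^n(\cdot)$, being careful to keep track of the projection-correction term $G_c^n(\cdot)$, which is simply carried along as $G(\cdot)$ rather than shown to vanish (that is the job of the next lemma). First I would unwind the recursion \eqref{projective_proj}: by telescoping, for $t \in [t(n+k), t(n+k+1))$ one has
\[
X_l^n(t) = X_l^n(0) + \sum_{m} \hat a(n+m)\,\mu_{n+m}\bigl[ f(x_{n+m}) + \epsilon_{n+m} + M_{n+m+1} \bigr] + G_c^n(t) + (\text{interpolation remainder}),
\]
where the sum runs over the appropriate block of indices and the interpolation remainder is $o(1)$ uniformly on $[0,T]$ because the step-sizes are square-summable and $f$, $\epsilon_n$, $M_{n+1}$ are bounded on the (projected, hence bounded) iterates. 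Rewriting the Riemann sum $\sum \hat a(n+m)\,\mu_{n+m} f(x_{n+m})$ as the integral $\int_0^t \mu^n(s) f(X_c^n(s))\,ds$ plus an error that is $O(L \cdot \sup_m \hat a(n+m) \cdot T)$, and similarly handling the $\epsilon$ term via $\epsilon_c^n(\cdot)$, I reduce matters to showing that along the chosen subsequence $n \in K$ these pieces converge to the claimed limit.

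The next step is to pass to the limit term by term. The martingale part $\sum \hat a(n+m)\,\mu_{n+m} M_{n+m+1}$ equals $W_l^n(t)$ up to interpolation error, and by the discussion preceding this subsection (which uses $(S3)$) every limit point of $\{W_l^n([0,T])\}$ is the constant-$0$ function, so this contributes nothing. For the drift term, I invoke exactly the machinery in the proof of Lemma~\ref{nodelay_limitset}: $\mu^n(\cdot)$ lives in the compact space $\mathcal V$ and, because balanced step-sizes are used (Theorem~3.2 of \cite{Borkar_asynchronous}), its only limit point is the constant matrix $\mu^* = diag(1/d,\dots,1/d)$; meanwhile $\{\epsilon_c^n([0,T])\}_{n\in K}$ is relatively compact in $\mathbb L_2([0,T],\mathbb R^d)$ (shown earlier, since $\|\epsilon_c^n(t)\|\le\epsilon$) and converges weakly to $\epsilon(\cdot)$ by hypothesis. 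Since $X_c^n(\cdot)$ and $X_l^n(\cdot)$ have the same limit $X(\cdot)$ (they differ by $O(\sup_m \hat a(n+m))$ pointwise), continuity of $f$ together with the weak/strong convergence pairing of $\mu^n f(X_c^n(\cdot))$ against test functions yields $\int_0^t \mu^n(s) f(X_c^n(s))\,ds \to \int_0^t \mu^* f(X(s))\,ds$ and $\int_0^t \epsilon_c^n(s)\,ds \to \int_0^t \epsilon(s)\,ds$. Finally $G_c^n(\cdot)\to G(\cdot)$ by assumption and $X_l^n(0)\to X(0)$ with $X(0)\in\overline{\mathcal C}$ since all projected iterates lie in $\overline{\mathcal C}$. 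Assembling these gives \eqref{gis0_1_eq}.

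The main obstacle is the joint limit of the product $\mu^n(s) f(X_c^n(s))$: one is combining weak convergence of the measurable ``frequency'' process $\mu^n(\cdot)$ in $\mathcal V$ with uniform convergence of $X_c^n(\cdot)$, and one must argue that $f(X_c^n(\cdot))\to f(X(\cdot))$ strongly in $\mathbb L_2([0,T])$ (immediate from Lipschitz continuity and the uniform bound), so that the product converges against any $\mathbb L_2$ test function — this is precisely the step handled in Borkar's proof of Theorem~2, Chapter~7 of \cite{BorkarBook}, and I would cite it rather than reprove it. A secondary subtlety is bookkeeping the three $o(1)$ remainders (interpolation, Riemann-sum discretization of the drift, vanishing martingale) uniformly on $[0,T]$; each is controlled by $\sup_{m\ge n}\hat a(m)\to 0$ and the a.s. boundedness of the projected iterates, so none causes real difficulty.
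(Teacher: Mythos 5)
Your proposal is correct and follows essentially the same route as the paper: the paper likewise writes the exact integral identity $X_l^n(t) = X_l^n(0) + \int_0^t diag(\mu^n_c(s)) f(X_c^n(s))\,ds + W_l^n(t) + G_c^n(t) + \int_0^t \epsilon_c^n(s)\,ds$, kills $W_l^n$ via $(S3)$, adds and subtracts $\int_0^t \mu^* f(X_c^n(s))\,ds$, and disposes of the resulting remainder $\eta_n$ (your ``joint limit of $\mu^n(s) f(X_c^n(s))$'' obstacle) by citing \textit{Lemma 3.5} of Abounadi et al.\ \cite{abounadi}, just as you cite the equivalent machinery in \cite{BorkarBook}. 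The decomposition, the term-by-term limits, and the decision to carry $G(\cdot)$ along rather than show it vanishes all match the paper's argument.
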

\begin{proof}
%  We have the following: \\
%  (i) Let $G(\cdotp)$ be the limit of $\{G_c^n([0,T])\}_{n \in K}$
%  and $X(\cdotp)$ the limit of $\{X_l^n([0,T])\}_{n \in K}$, in $D([0,T], \mathbb{R}^d)$ equipped
%  with the Skorohod topology.\\
%  (ii) Let $\epsilon(\cdotp)$ be the weak limit of $\epsilon_c^n(\cdotp)$ in 
%  $\mathbb{L}_2([0,T], \mathbb{R}^d)$.
 Since $X_c^n(t) \to X(t)$ for $t \in [0,T]$, we get
 \[
  \int \limits_0^t \mu^* f(X_c ^n(s))  ds \to \int \limits_0 ^t \mu^* f(X(s)) ds.
 \]
Note that we have
\[
\begin{split}
 X_l^n(t) =  &X_l^n(0) + \int \limits_0^t diag(\mu^n_c(s)) f(X_c^n(s)) ds\ +\\ &W_l^n(t) + G_c^n(t) + 
 \int_0 ^t \epsilon_c ^n(s) ds.
 \end{split}
\]
Adding and subtracting $\int \limits_0^t \mu^* f(X_c^n(s)) ds$ in the above equation, we obtain:
\begin{equation}
\label{projective_gis0_3}
\begin{split}
 X_l^n(t) =  &X_l^n(0) + \int \limits_0^t \mu^* f(X_c^n(s)) ds\ + \\ &W_l^n(t) + G_c^n(t) + 
 \int_0 ^t \epsilon_c ^n(s) ds + \eta_n(t),
 \end{split}
\end{equation}
\begin{flushleft}
where $\eta_n(t) = \int \limits_0^t diag(\mu^n_c(s)) f(X_c^n(s)) ds - \int \limits_0^t \mu^* f(X_c^n(s)) ds$. 
\end{flushleft}
From Assumption $(S3)$ it follows that 
$\lim \limits_{n \to \infty} \sup \limits_{t \in [0,T]} \lVert W_l^n(t) \rVert = 0$. Suppose we show that
$\lim \limits_{n \to \infty} \sup \limits_{t \in [0,T]} \lVert \eta_n(t) \rVert = 0$, then we may use
the previously mentioned observations to conclude that
(\ref{projective_gis0_3}) converges to
\[
\begin{split}
 X(t) = X(0) &+ \int \limits_0^t \mu^* f(X(s)) ds + G(t) + \\
 &\int_0 ^t \epsilon(s) ds \text{ as $n \to \infty$}.
 \end{split}
\]
Recall that $\epsilon(\cdotp)$ is the weak limit of $\{\epsilon_c ^n ([0,T])\}_{n \in K}$.
Thus, it is left to show that $\lim \limits_{n \to \infty} \sup \limits_{t \in [0,T]} \lVert \eta_n(t) \rVert = 0$.
The proof of this is along the lines of the proof of \textit{Lemma 3.5} in Abounadi et al., \cite{abounadi}.
\end{proof}
%%%%%%%%%%%%%%%%%%%%%%%%%%%%%%%%%%%%%%%%%%%%%%%%%%%%%%%%%%%%%%%%%%%%%%%%%%%%%%%%%%%%%%%%%%%%%%%%%%%%%%%%%
%%%%%%%%%%%%%%%%%%%%%%%%%%%%%%%%%%%%%%%%%%%%%%%%%%%%%%%%%%%%%%%%%%%%%%%%%%%%%%%%%%%%%%%%%%%%%%%%%%%%%%%%%
%%%%%%%%%%%%%%%%%%%%%%%%%%%%%%%%%%%%%%%%%%%%%%%%%%%%%%%%%%%%%%%%%%%%%%%%%%%%%%%%%%%%%%%%%%%%%%%%%%%%%%%%%
\begin{lemma}
 \label{projective_gis0_2}
The $G(\cdotp)$ of Lemma~\ref{projective_gis0_1} is the constant $0$ function. As a consequence
the projective scheme (\ref{projective_proj}) converges to $\mathbb{A}$.
\end{lemma}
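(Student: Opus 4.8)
The plan is to argue by contradiction, leaning on the Lyapunov function $\tilde V$ and the fact that the projection region was chosen as a sublevel set of $\tilde V$. Recall from the construction preceding Proposition~\ref{stability_prop1} that $\mathcal{C} = \mathcal{V}_r = \{x \mid \tilde V(x) < r\}$, so $\overline{\mathcal{C}} = \{x \mid \tilde V(x) \le r\}$, and that $\overline{\mathcal{B}} \subset \mathcal{C}$ is compact, whence $r' := \sup_{x \in \overline{\mathcal{B}}} \tilde V(x) < r$. Two monotonicity facts drive the argument. First, whenever a projection fires at step $m$, i.e. $g_m \neq 0$, we have $\tilde x_{m+1} \notin \mathcal{C}$ and $x_{m+1} \in \overline{\mathcal{B}}$, so $\tilde V(x_{m+1}) \le r' < r$; moreover $\lVert g_m \rVert = O(\overline{a}(m))$ by the pointwise bound established in the proof of Lemma~\ref{projective_rc}. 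Second, over any run of steps carrying no projection, the interpolated iterates track a solution of $\dot x(t) \in \mathrm{diag}(1/d, \ldots, 1/d) f(x(t)) + \overline{B}_\epsilon(0)$ up to an $o(1)$ discretization error controlled by $(S1)$-$(S3)$ and the Lipschitz bound; since the $1/d$-scaling only reparametrizes time, $\tilde V$ -- a Lyapunov function for $\dot x \in f(x) + \overline{B}_\epsilon(0)$, equivalently the function witnessing that $\mathcal{C}$ is inward directing (Proposition~\ref{stability_prop2}) -- is non-increasing along such a run, again up to $o(1)$.

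Combining these: right after any projection the iterate lies in $\{\tilde V \le r'\}$, and thereafter $\tilde V$ cannot climb (modulo $o(1)$) until the next projection. Hence for $n \in K$ large one first shows $\tilde V(x_{m(n)}) \le r' + o(1)$ by tracing back from $m(n)$ to the most recent projection -- the case of no projection for a long time only makes $\tilde V$ smaller, being driven toward $\mathbb{A}$ -- using $g_{m(n)-1} = 0$ to guarantee the single step leading into $m(n)$ is a pure drift step. Then an induction over the window $[t(m(n)), t(m(n)) + T]$: as long as no projection has fired, $\tilde V$ stays $\le r' + o(1) < r$, so $\tilde x_{\cdot+1}$, being an $O(\overline{a}(n))$-perturbation, still has $\tilde V < r$ by uniform continuity of $\tilde V$ on $\overline{\mathcal{N}}$, hence lies in $\mathcal{C}$ and no projection fires. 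Thus for $n \in K$ large, no projection occurs in the window, so $G_c^{m(n)}(\cdot)$ vanishes on $[0,T]$ and its limit $G(\cdot)$ is the constant $0$ function. (A weaker-but-sufficient variant: at most one projection, of size $O(\overline{a}(n)) \to 0$, fires in the window, so the total variation of $G_c^{m(n)}$ on $[0,T]$ tends to $0$.)

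The main obstacle is the quantitative estimate hidden in "the drift drives $\tilde V$ down over the discrete recursion, uniformly over the relevant windows" -- i.e. making the trace-back from $m(n)$ to the last projection rigorous so that $\limsup_{n \in K} \tilde V(x_{m(n)}) < r$ with a definite gap, and controlling the accumulated discretization error along a run of pure drift steps. This is precisely where $(S1)$-$(S3)$, the balanced-step-size normalisation, and the uniform continuity of $\tilde V$ on the compact $\overline{\mathcal{N}}$ are genuinely used, and it mirrors the corresponding estimates in Ramaswamy and Bhatnagar~\cite{ramaswamy2017}; everything else (relative compactness of the trajectory families, the form of the limit) has already been assembled in the preceding lemmas.

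Finally, the stated consequence follows quickly. With $G \equiv 0$, Lemma~\ref{projective_gis0_1} says the interpolated iterates of \eqref{projective_proj} form an asymptotic pseudotrajectory of $\dot x(t) \in \mathrm{diag}(1/d, \ldots, 1/d) f(x(t)) + \overline{B}_\epsilon(0)$ with initial condition in $\overline{\mathcal{C}}$. The limit set of $\{x_n\}_{n \ge 0}$ is therefore an internally chain transitive set of this DI; since it shares its orbits -- hence its attractor $\mathbb{A}$ and the basin of $\mathbb{A}$ -- with $\dot x \in f(x) + \overline{B}_\epsilon(0)$, and since all projective iterates lie in $\overline{\mathcal{C}} \subset \overline{\mathcal{N}}$, a fundamental neighborhood of $\mathbb{A}$, that chain transitive set must be contained in $\mathbb{A}$. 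Hence $d(x_n, \mathbb{A}) \to 0$ a.s., i.e. \eqref{projective_proj} converges to $\mathbb{A}$ (cf. \cite{Benaim05}, \cite{BorkarBook}).
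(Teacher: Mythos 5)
The paper does not actually prove this lemma; it is a one-line pointer to \textit{Lemma 3} of \cite{ramaswamy2017}, whose argument runs through the inward-directing property of $\mathcal{C}$ (Proposition~\ref{stability_prop2}): any limit $X(\cdot)$ of $\{X_l^{m(n)}\}$ starts in $\overline{\mathcal{C}}$ (because $g_{m(n)-1}=0$), so on any initial interval where $G$ vanishes the limit trajectory solves the DI and is driven strictly into the open set $\mathcal{C}$; since the interpolated iterates converge to it uniformly on $[0,T]$, they eventually stay in $\mathcal{C}$ throughout the window and no projection can fire, forcing $G\equiv 0$. Your proposal is a reconstruction of essentially this same argument, phrased at the level of the Lyapunov function $\tilde V$ and its sublevel sets rather than the limiting trajectory --- which is equivalent, since $\mathcal{C}=\mathcal{V}_r$ and the inward-directing property is exactly witnessed by $\tilde V$. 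Your main line (for $n\in K$ large, $\tilde V(x_{m(n)})$ sits a definite gap below $r$ and cannot climb across that gap during $[0,T]$, so no projection fires) is the right shape, and you correctly flag that the quantitative trace-back estimate is the content genuinely imported from \cite{ramaswamy2017}.

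One concrete error: your claim that $\lVert g_m\rVert = O(\overline{a}(m))$ when a projection fires is false, and the ``weaker-but-sufficient variant'' built on it does not work. From \eqref{sa_proj_new}, when $\tilde x_{m+1}\notin\mathcal{C}$ the iterate is reset to the nearest point of $\overline{\mathcal{B}}$, so $\lVert g_m\rVert \ge d\bigl(\mathbb{R}^d\setminus\mathcal{C},\,\overline{\mathcal{B}}\bigr) > 0$ whenever it is nonzero; the proof of Lemma~\ref{projective_rc} only bounds $\lVert g_m\rVert$ above by a constant $C_2$, not by the step size. Hence ``at most one projection in the window'' would contribute a jump of fixed positive size to $G_c^{m(n)}$ and would \emph{not} give $G\equiv 0$. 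What is $O(\hat a(n))$ is the pre-projection increment $\tilde x_{n+1}-x_n$, and that is the quantity your induction actually needs: it is small enough that an iterate with $\tilde V < r-\gamma$ cannot exit $\mathcal{C}$ in one step, so you must (and can) conclude that \emph{no} projection fires in the window, not that the fired projections are small. With that fallback deleted, the remainder of your argument, including the deduction that the limit set is an internally chain transitive set contained in the fundamental neighborhood $\overline{\mathcal{N}}$ and hence in $\mathbb{A}$, is consistent with the cited proof.
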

\begin{proof}
 For a proof of this lemma the reader is referred to the proof of \textit{Lemma 3} of \cite{ramaswamy2017}.
\end{proof}
We are now ready to state the other main result of this paper. Again, we assume that balanced step-sizes are used.
\begin{theorem}
 \label{projective_main}
 Under $(A1)$-$(A3)$ and $(S1)$-$(S5)$, the iteration given by (\ref{asmp_aasaa}) is stable ($\sup \limits_{n \ge 0}\ \lVert x_n \rVert < \infty$ a.s.) and converges to a closed connected internally chain transitive invariant set
 associated with $\dot{x}(t) \in \mu^* f(x(t)) + \overline{B}_\epsilon(0)$.
\end{theorem}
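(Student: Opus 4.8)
The plan is to route the whole argument through the projective counterpart \eqref{projective_proj} of \eqref{asmp_aasaa}: first show that this projective scheme converges to the attractor $\mathbb{A}$, then argue that the projection operator $\pro_{\mathcal{B,C}}$ is active only finitely often, so that the projective and original iterates eventually coincide, and finally read off stability and the limiting set of \eqref{asmp_aasaa} from Theorem~\ref{delay_main}. For the first part, $\{\hat x_n\}\subseteq\mathcal{C}$ with $\mathcal{C}$ bounded, so the projective iterates are trivially bounded; Lemma~\ref{projective_rc} gives relative compactness of $\{X_l^n([0,T])\}$ and $\{G_c^n([0,T])\}$ in the Skorohod space, $(S3)$ forces every limit of $\{W_l^n\}$ to be the zero function, and along the index set $K=\{n\mid g_{n-1}=0\}$ Lemmas~\ref{projective_gis0_1} and \ref{projective_gis0_2} identify the limit of $X_l^n$ as a genuine solution of $\dot x(t)\in\mu^* f(x(t))+\overline B_\epsilon(0)$ started in $\overline{\mathcal{C}}$, with the perturbation $G\equiv 0$. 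Since by $(S4)$/$(S4a)$ the o.d.e.\ $\dot x=f(x)$ has attractor $\hat{\mathbb A}$ with fundamental neighborhood $\overline{\mathcal N}\supseteq\mathcal{C}$, upper semicontinuity of attractors gives that $\dot x\in f(x)+\overline B_\epsilon(0)$, and hence $\dot x\in\mu^* f(x)+\overline B_\epsilon(0)$, has attractor $\mathbb A\subset\mathcal B$ with the same fundamental neighborhood; so every such limit trajectory converges to $\mathbb A$ while remaining in $\mathcal{C}$ by Proposition~\ref{stability_prop2}.

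The crux is to deduce from this that the projection switches off. Suppose for contradiction that $g_n\neq 0$ for infinitely many $n$. Right after such a ``hit'' we have $\hat x_{n+1}\in\overline{\mathcal B}$; applying the reasoning above with the time-shift placed at that hit, the iterate shadows, with vanishing error, a solution of $\dot x\in\mu^* f(x)+\overline B_\epsilon(0)$ emanating from $\overline{\mathcal B}$. Because $\mathcal{C}$ is inward directing and the DI solutions from $\overline{\mathcal B}$ enter and stay in a sublevel set $\mathcal V_r\subset\mathcal{B}$ of $\tilde V$ well before they could approach $\partial\mathcal{C}$, and because Lemma~\ref{projective_rc} supplies a uniform lower bound $\Delta$ on the separation between consecutive discontinuities, for all large $n$ the iterate stays inside $\mathcal{C}$ after a hit, contradicting the occurrence of a further hit. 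Hence there is a sample-path dependent $N_0$ with $g_n=0$ for all $n\ge N_0$; in particular $K$ is co-finite and $\hat x_{n+1}=\tilde x_{n+1}$ for $n\ge N_0$.

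It follows that the original recursion \eqref{asmp_aasaa} agrees with the projective one from $N_0$ on up to the correction bounded by $(S5)$, so $\sup_n\lVert x_n\rVert<\infty$ a.s., which is precisely $(A4)$. Armed with $(A4)$ together with $(A1)$--$(A3)$ and $(A5)$ (implied by $(S3)$), Theorem~\ref{delay_main} applies to \eqref{asmp_aasaa}; since balanced step-sizes are used, Theorem~3.2 of \cite{Borkar_asynchronous} gives $\Lambda(t)\equiv\mu^*=diag(1/d,\dots,1/d)$, so \eqref{asmp_aasaa} has the same limiting set as the \emph{autonomous} Marchaud DI $\dot x(t)\in\mu^* f(x(t))+\overline B_\epsilon(0)$. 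Finally, the iterates are bounded and form an asymptotic pseudotrajectory of this DI, so by the set-valued analogue of Bena\"{i}m's limit-set theorem (Theorem~3.6 and the surrounding results in Bena\"{i}m, Hofbauer and Sorin \cite{Benaim05}) the limiting set $\bigcap_N\overline{\{x_n\mid n\ge N\}}$ is a closed, connected, internally chain transitive invariant set of $\dot x\in\mu^* f(x)+\overline B_\epsilon(0)$, as claimed.

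The main obstacle is the contradiction argument in the second paragraph. There is an apparent circularity: Lemmas~\ref{projective_gis0_1}--\ref{projective_gis0_2} presuppose $g_{m(n)-1}=0$ in order to conclude convergence to $\mathbb A$, yet we want that convergence to force $g_n=0$ eventually. Breaking it requires analysing the epochs immediately following projection hits and combining the inward-directing property of $\mathcal{C}$ with the uniform discontinuity-gap $\Delta$ from Lemma~\ref{projective_rc}, so that the shadowed DI solution is trapped inside $\mathcal{B}$ strictly before it could exit $\mathcal{C}$; this mirrors the argument of \textit{Lemma 3} in \cite{ramaswamy2017} and is the technical heart of the stability claim.
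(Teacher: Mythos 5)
Your proposal is correct and follows essentially the same route as the paper: convergence of the projective counterpart to $\mathbb{A}$ via Lemmas~\ref{projective_gis0_1}--\ref{projective_gis0_2}, transfer of boundedness to the original iterates via $(S5)$, and identification of the limiting set via Theorem~\ref{delay_main} with balanced step-sizes. The paper's own proof is far terser -- it delegates the entire ``projection switches off'' argument (your second paragraph, including the circularity you flag) to Lemma~\ref{projective_gis0_2} and \emph{Lemma 3} of \cite{ramaswamy2017} -- so what you have written is a correct expansion of the same argument rather than a different one.
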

\begin{proof}
The reader may recall that $\mu^* = diag(1/d, \ldots, 1/d)$.
 It follows from Lemma~\ref{projective_gis0_2} that the associated projective iterates, 
 say $\{\hat{x}_n\}_{n \ge 0}$, corresponding to $\{x_n\}_{n \ge 0}$ converge to $\mathbb{A}$. In other words,
 there exists $N$, possibly sample path dependent, such that $\hat{x}_n \in \mathcal{C}$ for $n \ge N$.
 It follows from $(A5)$ that $\sup \limits_{n \ge N} \lVert x_n \rVert < \infty$ a.s. 
 
 The second part of the statement directly follows from Theorem~\ref{delay_main}.
\end{proof}

\subsection{Stability assuming $(A5)$ instead of $(S3)$}
The statement of Theorem \ref{projective_main} is true when the weaker $(A5)$ is assumed instead of the stricter $(S3)$. 
The details involved (in a related setup) can be found in \textit{Section 6} of \cite{ramaswamy2017}.
We merely present the steps involved without any proofs and refer the reader to 
\textit{Section 6} of \cite{ramaswamy2017} for details. 

The purpose of $(S3)$ is to show that any two discontinuities of $\{X_l^n([0,T]) \mid n \ge 0\}$
and $\{G_c^n([0,T]) \mid n \ge 0\}$ are at least $\Delta$ apart.
An important step in proving the aforementioned
claim with $(A5)$ replacing $(S3)$ is the following auxiliary lemma.
\begin{lemma}[\textit{Lemma 5}, \cite{ramaswamy2017}] \label{projective_gn_1}
Let $\{t_{m(n)}, t_{l(n)}\}_{n \ge 0}$ be such that $t_{l(n)} > t_{m(n)}$, $t_{m(n+1)} > t_{l(n)}$ 
and $\underset{n \to \infty}{\lim} \left(t_{l(n)} - t_{m(n)} \right) = 0$. Fix an arbitrary $c > 0$ and consider the following:
\[
\psi_n := \left \lVert \sum \limits_{i = m(n)}^{l(n)-1} a(i) M_{i+1} \right \rVert.
\]
Then $P \left( \{\psi_n > c\}\  i.o. \right) = 0$ within the context of 
the projective scheme given by (\ref{projective_proj}).
\end{lemma}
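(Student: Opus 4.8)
The plan is to recognize $\psi_n$ as an increment of the noise martingale attached to the projective scheme \eqref{projective_proj} and to lean on that martingale's almost sure convergence. Set $\zeta_n := \sum_{i=0}^{n-1} a(i) M_{i+1}$ (equivalently, in the notation of the projective scheme, the martingale $\sum_{i=0}^{n-1} D_i M_{i+1}$), an $\mathbb{R}^d$-valued $\{\mathcal{F}_n\}$-martingale, so that $\psi_n = \lVert \zeta_{l(n)} - \zeta_{m(n)} \rVert$. First I would establish that $\zeta_n$ converges almost surely. In the projective scheme the iterates satisfy $\{x_n\}_{n \ge 0} \subset \mathcal{C}$ and $\mathcal{C}$ is bounded by construction, so $\sup_n \lVert x_n \rVert =: C_{\mathcal C} < \infty$; hence $(A5)$ gives $E[\lVert M_{n+1} \rVert^2 \mid \mathcal{F}_n] \le K(1 + C_{\mathcal C}^2)$ for all $n$, and combined with $\sum_n a(n)^2 < \infty$ (from $(A2)(i)$) this makes the quadratic variation $\sum_n E[\lVert a(n) M_{n+1} \rVert^2 \mid \mathcal{F}_n]$ almost surely finite. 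By the martingale convergence theorem — exactly the argument of Lemma~\ref{nodelay_noise}, which uses only a.s.\ boundedness of the iterates, see also Chapter~2 of \cite{BorkarBook} — we get $\zeta_n \to \zeta_\infty$ a.s.\ for an a.s.-finite $\zeta_\infty$; in particular $\{\zeta_n\}$ is a.s.\ Cauchy.

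Next I would observe that $t(\cdot)$ is nondecreasing and that the hypotheses $t_{m(n)} < t_{l(n)} < t_{m(n+1)}$ force the integer indices to satisfy $m(n) < l(n) < m(n+1)$ for every $n$; hence $\{m(n)\}_{n \ge 0}$ and $\{l(n)\}_{n \ge 0}$ are strictly increasing and diverge to $\infty$. Now fix $c > 0$. Along a.e.\ sample path, the Cauchy property supplies some $N$ with $\lVert \zeta_p - \zeta_q \rVert < c$ whenever $p, q \ge N$; choosing $n$ large enough that $m(n) \ge N$ (possible since $m(n) \to \infty$, whence also $l(n) > m(n) \ge N$) yields $\psi_n = \lVert \zeta_{l(n)} - \zeta_{m(n)} \rVert < c$ for all such $n$. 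Thus $\{\psi_n > c\}$ occurs only finitely often on that path, i.e.\ $P(\{\psi_n > c\}\ i.o.) = 0$.

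I do not expect a serious obstacle; the two points that actually carry the argument are (i) the a.s.\ boundedness of the \emph{projective} iterates, which is what legitimizes invoking $(A5)$ here in place of the bounded-noise assumption $(S3)(i)$, and (ii) the recognition that it is the divergence of $m(n)$ and $l(n)$ — not the vanishing window length $t_{l(n)} - t_{m(n)} \to 0$ — that the proof exploits. If one prefers to bypass the a.s.\ convergence theorem (for instance to accommodate $m(n), l(n)$ that are random), the same conclusion follows by Borel--Cantelli: the windows $[m(n), l(n))$ are pairwise disjoint (since $m(n+1) > l(n)$), so Chebyshev's inequality together with the orthogonality of martingale increments and the conditional second-moment bound above gives $P(\psi_n > c) \le c^{-2} K(1 + C_{\mathcal C}^2) \sum_{i=m(n)}^{l(n)-1} a(i)^2$, and therefore $\sum_n P(\psi_n > c) \le c^{-2} K(1 + C_{\mathcal C}^2) \sum_i a(i)^2 < \infty$.
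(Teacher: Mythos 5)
Your argument is correct and is essentially the proof the paper defers to in \cite{ramaswamy2017}: within the projective scheme the iterates lie in the bounded set $\mathcal{C}$, so $(A5)$ together with $\sum_{n} \overline{a}(n)^2 < \infty$ yields an a.s.\ finite quadratic variation, the noise martingale converges a.s., and the Cauchy criterion applied along the diverging indices $m(n) < l(n)$ forces $\psi_n \to 0$, which gives $P(\{\psi_n > c\}\ i.o.) = 0$. The only quibble concerns your optional aside: the Chebyshev/Borel--Cantelli variant is the one that really needs the windows $[m(n), l(n))$ to be deterministic (or stopping-time structured) for the increment-orthogonality computation, whereas it is the a.s.-convergence route that painlessly accommodates random diverging indices.
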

Colloquially, Lemma~\ref{projective_gn_1} states the following: After a lapse of considerable time, there are no significant contributions to jumps in $X_l^n(\cdotp)$
or $G_c^n(\cdotp)$ from
the Martingale difference noise sequence within shrinking time intervals.
If we are unable to find a separating $\Delta$, then it can be shown that Lemma~\ref{projective_gn_1}
is contradicted. Therefore, Theorem~\ref{projective_main} is true under the standard, weak assumption
on noise imposed by $(A5)$. 
As a consequence, the following modification of Theorem~\ref{projective_main} is immediate.
\begin{theorem}
 \label{projective_main_1}
 Under $(A1)$-$(A3)$, $(A5)$ and $(S1), (S2), (S4)$ and $(S5)$, the iteration given by (\ref{asmp_aasaa}) is bounded
 almost surely (stable) and converges to a closed connected internally chain transitive invariant set
 associated with $\dot{x}(t) \in \mu^* f(x(t)) + \overline{B}_\epsilon(0)$.
\end{theorem}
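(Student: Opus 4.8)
The plan is to re-run the entire argument behind Theorem~\ref{projective_main}, substituting the single place where the stronger noise hypothesis $(S3)$ was genuinely used. Recall that $(S3)$ entered only through Lemma~\ref{projective_rc}: the uniform bound $\lVert M_{n+1}\rVert\le D$ together with $(S3)(ii)$ guaranteed that the interpolated trajectories $\{X_l^n([0,T])\}_{n\ge 0}$ and $\{G_c^n([0,T])\}_{n\ge 0}$ have the property that any two of their discontinuities are separated by at least a fixed $\Delta>0$, which, combined with pointwise boundedness, yields relative compactness in the Skorohod topology. Everything downstream --- Lemma~\ref{projective_gis0_1}, Lemma~\ref{projective_gis0_2}, and the conclusion that the projective scheme \eqref{projective_proj} converges to $\mathbb{A}$ --- uses only $(A5)$-type hypotheses once relative compactness is in hand. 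So it suffices to re-establish the $\Delta$-separation of discontinuities using $(A5)$ alone, exactly as in \textit{Section~6} of \cite{ramaswamy2017}.

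First I would reduce the $\Delta$-separation to a contradiction argument: if no such $\Delta$ exists, then along a subsequence there are pairs of ``large'' jumps of $X_l^n(\cdot)$ --- equivalently of the cumulative reflection term $G_c^n(\cdot)$ --- occurring within time windows whose lengths shrink to zero. Because $f(x)+\overline{B}_\epsilon(0)$ is bounded on $\overline{\mathcal{C}}$ and $\sup_n a(n)\le 1$, the drift-plus-bias portion contributes only $O(\text{window length})\to 0$ to any such jump, and the reflection term $g_m$ is nonzero only when $x_m$ is near $\partial\mathcal{B}$; the sole remaining source of a non-vanishing jump within a shrinking window is the accumulated martingale increment $\sum a(i)M_{i+1}$ over that window. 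This is precisely the quantity $\psi_n$ of Lemma~\ref{projective_gn_1}, which asserts $P(\{\psi_n>c\}\ i.o.)=0$ for every $c>0$. Hence the hypothesized shrinking-window jumps occur only finitely often almost surely, yielding the contradiction and restoring the analogue of Lemma~\ref{projective_rc} under $(A5)$.

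With relative compactness re-established, I would then invoke the remainder of the proof of Theorem~\ref{projective_main} essentially verbatim: pass to a subsequence $\{m(n)\}\subseteq K$ along which $\{\epsilon_c^{m(n)}([0,T])\}$ converges weakly in $\mathbb{L}_2$ and $\{X_l^{m(n)}([0,T])\}$, $\{G_c^{m(n)}([0,T])\}$ converge in $D([0,T],\mathbb{R}^d)$ with $g_{m(n)-1}=0$; use the balanced step-size hypothesis (via $(S1)$, $(S2)$ and \textit{Theorem~3.2} of \cite{Borkar_asynchronous}) together with the Abounadi-type estimate $\sup_{t\in[0,T]}\lVert\eta_n(t)\rVert\to 0$ to obtain Lemma~\ref{projective_gis0_1}, then apply Lemma~\ref{projective_gis0_2} to conclude $G(\cdot)\equiv 0$ and that the projective scheme converges to the attractor $\mathbb{A}$ of $\dot{x}(t)\in f(x(t))+\overline{B}_\epsilon(0)$ built from $(S4)$. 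This forces $\hat{x}_n\in\mathcal{C}$ for all large (sample-path-dependent) $n$, whence $(S5)$ gives $\sup_n\lVert x_n\rVert<\infty$ a.s.; stability in turn supplies $(A4)$, so Theorem~\ref{delay_main} applies and \eqref{asmp_aasaa} has the same limiting set as $\dot{x}(t)\in\mu^* f(x(t))+\overline{B}_\epsilon(0)$. That the limiting set is closed, connected and internally chain transitive for this DI then follows from the standard characterization of limit sets of such asynchronous set-valued SAs, as in \cite{Benaim05} and \cite{BorkarBook}.

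I expect the main obstacle to be exactly the re-derivation of the $\Delta$-separation under $(A5)$: making rigorous that within a shrinking time window the only candidate for a non-vanishing jump is the accumulated martingale increment, and then ruling it out via Lemma~\ref{projective_gn_1}. The bookkeeping --- which interpolated process jumps, how the boundary reflection term $g_m$ is accounted for, and uniformity over $[0,T]$ --- is delicate, although it is by now a well-trodden path following \cite{ramaswamy2017}.
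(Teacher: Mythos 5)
Your proposal follows essentially the same route as the paper: you localize the role of $(S3)$ to the $\Delta$-separation of discontinuities needed for relative compactness in $D([0,T],\mathbb{R}^d)$, replace it by a contradiction argument powered by Lemma~\ref{projective_gn_1} (ruling out non-vanishing martingale contributions over shrinking windows), and then rerun the proof of Theorem~\ref{projective_main} verbatim. This is precisely the argument the paper sketches (deferring details to \textit{Section~6} of \cite{ramaswamy2017}), so no further comparison is needed.
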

%%%%%%%%%%%%%%%%%%%%%%%%%%%%%%%%%%%%%%%%%%%%%%%%%%%%%%%%%%%%%%%%%%%%%%%%%%%%%%%%%%%%%%%%%%%%%%%%%%%%%%%%%%%%%%%%%%%%%%%%%%%%%%%%%%%%%%%%%%%%%%%%
%%%%%%%%%%%%%%%%%%%%%%%%%%%%%%%%%%%%%%%%%%%%%%%%%%%%%%%%%%%%%%%%%%%%%%%%%%%%%%%%%%%%%%%%%%%%%%%%%%%%%%%%%%%%%%%%%%%%%%%%%%%%%%%%%%%%%%%%%%%%%%%%
%%%%%%%%%%%%%%%%%%%%%%%%%%%%%%%%%%%%%%%%%%%%%%%%%%%%%%%%%%%%%%%%%%%%%%%%%%%%%%%%%%%%%%%%%%%%%%%%%%%%%%%%%%%%%%%%%%%%%%%%%%%%%%%%%%%%%%%%%%%%%%%%
%%%%%%%%%%%%%%%%%%%%%%%%%%%%%%%%%%%%%%%%%%%%%%%%%%%%%%%%%%%%%%%%%%%%%%%%%%%%%%%%%%%%%%%%%%%%%%%%%%%%%%%%%%%%%%%%%%%%%%%%%%%%%%%%%%%%%%%%%%%%%%%%
\section{Applications} \label{sec_applications}
Value and policy iterations are popular reinforcement learning algorithms that are at once effective and easy to implement. As explained in Section \ref{intro_a2}, value and policy iterations are coupled with function approximation to counter Bellman's curse of dimensionality, arising in large-scale (continuous state and action spaces) learning and control problems. In related work documented in \cite{ramaswamy2017}, value iteration with function approximation is analyzed. However, it does not consider the multi-agent setting. Abounadi et. al. \cite{abounadi} analyzed the asynchronous version of Q-learning, but without function approximation.

In this section, the theory hitherto developed is used to present a complete analysis of A2VI and A2PG. A2VI and A2PG are the multi-agent counterparts of value iteration and policy gradient scheme, respectively, that also account for function approximation.
\subsection{Asynchronous approximate value iteration (A2VI)} \label{sec_a2vi}
Recall that we are interested in the recursion:
\begin{equation}
 \label{a2vi_a2vi}
 \begin{split}
  &J_{n+1}(i) = J_n(i) + a( \nu (n, i)) I(i \in Y_n) \\ &\left[ (\mathcal{A} T)_i (J_{n - \tau_{1 i}(n)}, \ldots, J_{n - \tau_{d i}(n)}) + 
 M_{n+1}(i) \right], \text{ where}
 \end{split}
\end{equation}
\begin{enumerate}
 \item $T$ is the Bellman operator,
 \item Let $\epsilon_n = (\mathcal{A}T)J_n - TJ_n$ be the approximation error at stage $n$. 
 The approximation operator $\mathcal{A}$ could be a deep neural network, or some other function approximator.
%  \item The remaining terms are as defined before.
\end{enumerate}
\begin{remark}
We do not distinguish between stochastic shortest path and 
infinite horizon discounted cost problems. The definition of the
Bellman operator $T$ changes appropriately based on the choice of problem \cite{BertsekasBook}. 
\end{remark}
The following assumptions are natural:
\begin{itemize}
 \item[(AV1)] The Bellman operator $T$ is contractive with respect to some weighted max-norm, 
 $\lVert \cdotp \rVert_\nu$, \textit{i.e.,} $\lVert Tx - Ty \rVert_\nu \le \alpha \lVert x - y \rVert_\nu$
 for some $0 < \alpha < 1$.
 \item[(AV2)] $T$ has a unique fixed point $J^*$ and $J^*$ is the 
 unique globally asymptotically stable equilibrium point of $\dot{J}(t) = TJ(t) - J(t)$.
 \item[(AV3)] $\limsup \limits_{n \to \infty}\ \lVert \epsilon_n \rVert_\nu \le \epsilon$ for some fixed $\epsilon > 0$.
\end{itemize}
 Given $x \in \mathbb{R}^d$ we make the following simple observations:\\
(i) $\lVert x  \rVert_\nu \le \frac{1}{\min \limits_{i} \nu_i} \lVert x \rVert$.\\
(ii) $\lVert x \rVert \le \frac{d}{\min \limits_{i} \nu_i} \lVert x \rVert_\nu$. \\
The following claim is an immediate consequence of these observations.
\begin{claim}
 \label{a2vi_lipschitz}
 $T$ is Lipschitz continuous with some Lipschitz constant $0 < L < \infty$.
\end{claim}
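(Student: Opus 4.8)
The plan is to chain the two norm-comparison observations recorded immediately before the claim with the contraction estimate $(AV1)$. Because all norms on $\mathbb{R}^d$ are equivalent, the only substantive task is to exhibit one explicit finite constant, so the argument is a short composition of inequalities rather than anything structural.

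Concretely, I would fix arbitrary $x,y \in \mathbb{R}^d$ and proceed in three steps. First, apply observation (ii) to the vector $Tx - Ty$, which gives $\lVert Tx - Ty \rVert \le \frac{d}{\min_i \nu_i}\, \lVert Tx - Ty \rVert_\nu$. Second, apply $(AV1)$ to get $\lVert Tx - Ty \rVert_\nu \le \alpha\, \lVert x - y \rVert_\nu$. Third, apply observation (i) to the vector $x - y$ to obtain $\lVert x - y \rVert_\nu \le \frac{1}{\min_i \nu_i}\, \lVert x - y \rVert$. Composing the three inequalities yields
\[
\lVert Tx - Ty \rVert \;\le\; \frac{d\,\alpha}{(\min_i \nu_i)^2}\, \lVert x - y \rVert ,
\]
so the claim holds with $L := \frac{d\,\alpha}{(\min_i \nu_i)^2}$, and $0 < L < \infty$ since $0 < \alpha < 1$, $d \ge 1$, and $\min_i \nu_i > 0$ by the positivity built into the definition of $\lVert \cdot \rVert_\nu$.

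There is essentially no obstacle: the statement is a routine consequence of finite-dimensional norm equivalence together with $(AV1)$. The only thing to watch is that the resulting constant is genuinely finite and strictly positive, which is guaranteed the moment one notes $\min_i \nu_i > 0$. Should one prefer an argument that does not pass through the precomputed observations, it suffices to invoke the general principle that a map which is Lipschitz with respect to one norm on $\mathbb{R}^d$ is automatically Lipschitz with respect to every norm, the constant changing only by the (finite) equivalence factors between the norms involved.
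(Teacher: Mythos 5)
Your argument is correct and is exactly what the paper intends: the paper states the claim as an ``immediate consequence'' of observations (i) and (ii) together with $(AV1)$, and your three-step chain $\lVert Tx - Ty \rVert \le \tfrac{d}{\min_i \nu_i}\lVert Tx - Ty \rVert_\nu \le \tfrac{d\alpha}{\min_i \nu_i}\lVert x - y \rVert_\nu \le \tfrac{d\alpha}{(\min_i \nu_i)^2}\lVert x - y \rVert$ is precisely that composition, with an explicit constant. No gaps.
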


The only difference between (\ref{a2vi_a2vi}) and (\ref{asmp_aasaa}) is that in (\ref{a2vi_a2vi}) the approximation errors
are bounded in the weighted max-norm sense. It is worth noting that the errors could be more generally bounded in the
weighted p-norm ($\lVert \cdotp \rVert_{\omega, p}$) sense. However it can be easily shown that
$C_l \lVert x \rVert_\nu \le \lVert x \rVert_{\omega, p} \le C_u \lVert x \rVert_\nu$,
for some $C_l, C_u > 0$, $x \in \mathbb{R}^d$. Hence it is sufficient to work
with errors that are bounded in the weighted max-norm sense. In $(AV3)$ we assume $\limsup \limits_{n \to \infty}\ \lVert \epsilon_n \rVert_\nu \le \epsilon$ a.s.,
while in $(A1)$ we assume $\limsup \limits_{n \to \infty}\ \lVert \epsilon_n \rVert \le \epsilon$ a.s. Since $B^\epsilon := \{ y \mid \lVert y \rVert_\nu \le \epsilon\}$ is a convex compact subset of 
 $\mathbb{R}^d$ (see \textit{Lemma 7.2} of \cite{ramaswamy2017}), the analyses presented in Sections~\ref{sec_convergence} and \ref{sec_stability} carry forward verbatim, with $B^\epsilon$
 replacing $B_\epsilon(0)$.
% \begin{claim}
%  \label{a2vi_be}
%   $B^\epsilon := \{ y \mid \lVert y \rVert_\nu \le \epsilon\}$ is a convex compact subset of 
%  $\mathbb{R}^d$, where $\epsilon > 0$.
% \end{claim}
% \begin{proof}
%  For a proof of the above claim the reader is referred to the proof of \textit{Lemma 7.2} in \cite{ramaswamy2017}.
% \end{proof}

It follows directly from $(AV2)$ that $(S4a)$ is satisfied. If we show that (\ref{a2vi_a2vi}) also satisfies $(S5)$, then
we may conclude that the
iterates are stable and convergent. For this purpose, we compare the iterates $\{J_n\}_{n \ge 0}$, from (\ref{a2vi_a2vi}),
to their projective counterparts $\{\hat{J}_n \}_{n \ge 0}$. We can show that $\hat{J}_n \to \mathbb{A}$, where 
$\mathbb{A}$ is an attractor of $\dot{x}(t) \in \mu^* (TJ(t) - J(t)) + B^\epsilon$, contained within a small neighborhood of $J^*$ and $\mu^* = diag(1/d, \ldots, 1/d)$.
This neighborhood is dependent on the approximation errors.
Since $\hat{J}_n \to \mathbb{A}$, $\exists N$, possibly sample path dependent, such that $\hat{J}_n \in \mathcal{C}$ for all $n \ge N$. Following the arguments
presented in the proof of \textit{Theorem 3} in \cite{ramaswamy2017} we can show that
\[
 \lVert J_n - \hat{J}_n \rVert_\nu \le \lVert J_N - \hat{J}_N \rVert_\nu \vee \left( \frac{2 \epsilon}{1 - \alpha} \right),
\]
where $\alpha$ is the ``contraction constant'' associated with the Bellman operator $T$. In other words, we get that (\ref{a2vi_a2vi}) satisfies 
$(S5)$. Supposing balanced step-sizes are used, the following result is immediate.
\begin{theorem}
 \label{a2vi_main}
  Under $(AV1)$-$(AV3)$, $(A5)$, $(S1)$ and $(S2)$, (\ref{a2vi_a2vi}) is stable and converges to
  some point in $\left \{J \mid \lVert TJ - J \rVert_\nu \le d \epsilon \right\}$, where $\epsilon$
  is the norm-bound on the approximation errors.
\end{theorem}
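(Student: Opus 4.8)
The plan is to read \eqref{a2vi_a2vi} as an instance of \eqref{asmp_aasaa} with objective function $f := T - I$, check that the hypotheses of Theorem~\ref{projective_main_1} hold for this $f$, and then refine the conclusion. Lipschitz continuity of $f$ is Claim~\ref{a2vi_lipschitz}, so $(A1)$ holds with the asymptotic bound supplied by $(AV3)$ --- with the one wrinkle that $(AV3)$ controls $\limsup_n \lVert \epsilon_n \rVert_\nu$ rather than $\limsup_n \lVert \epsilon_n \rVert$, so the relevant error set is $B^\epsilon = \{y : \lVert y \rVert_\nu \le \epsilon\}$ in place of $\overline{B}_\epsilon(0)$. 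Since $B^\epsilon$ is convex and compact (Lemma~7.2 of \cite{ramaswamy2017}), every step of Sections~\ref{sec_convergence} and \ref{sec_stability} carries over verbatim with $B^\epsilon$ substituted for $\overline{B}_\epsilon(0)$. The remaining structural hypotheses $(A2)$ and $(A3)$ reduce to the step-size condition $(S1)$ and the standing assumption on the delay process; $(A5)$, $(S1)$, $(S2)$ are given; and $(AV2)$ says precisely that $\{J^*\}$ is the global attractor of $\dot J(t) = TJ(t) - J(t)$, i.e. $(S4a)$. So the only thing genuinely left to verify is the stability surrogate $(S5)$.

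For $(S5)$ I would compare $\{J_n\}$ with its projective counterpart $\{\hat J_n\}$ built from $\pro_{\mathcal{B,C}}$. By Lemma~\ref{projective_gis0_2} applied to the A2VI projective scheme, $\hat J_n \to \mathbb{A}$, where $\mathbb{A}$ is an attractor of $\dot J(t) \in \mu^*(TJ(t) - J(t)) + B^\epsilon$ lying inside a prescribed neighbourhood $N^\delta(J^*)$ (via the $\epsilon(\delta)$/upper-semicontinuity mechanism of Section~\ref{sec_stability}); hence $\hat J_n \in \mathcal{C}$ for all $n \ge N$, so the $\hat J$-updates involve no further projection beyond $N$. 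For $n \ge N$ one expands $\lVert J_n - \hat J_n \rVert_\nu$ along the two now identically structured recursions and invokes $(AV1)$: even though $\mathcal{A}T$ is not itself nonexpansive, it lies within $\epsilon$ of the $\alpha$-contraction $T$, and a telescoping estimate --- exactly as in the proof of Theorem~3 of \cite{ramaswamy2017} --- gives $\lVert J_n - \hat J_n \rVert_\nu \le \lVert J_N - \hat J_N \rVert_\nu \vee \tfrac{2\epsilon}{1-\alpha}$. This bound is finite a.s., and since $\lVert \cdot \rVert_\nu$ and $\lVert \cdot \rVert$ are equivalent, $(S5)$ holds.

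With $(S5)$ in hand, Theorem~\ref{projective_main_1} --- with balanced step-sizes (\cite{Borkar_asynchronous}) so that the driving matrix is $\mu^* = diag(1/d, \dots, 1/d)$ --- gives that $\{J_n\}$ is stable and converges to a closed connected internally chain transitive invariant set of $\dot J(t) \in \mu^*(TJ(t) - J(t)) + B^\epsilon$. It then remains to locate that set. An equilibrium $J$ of this DI satisfies $0 \in \tfrac1d(TJ - J) + B^\epsilon$, i.e. $\lVert TJ - J \rVert_\nu \le d\epsilon$, so the equilibrium set is exactly $\{J : \lVert TJ - J \rVert_\nu \le d\epsilon\}$ --- the factor $d$ being the price of asynchronicity, entering through $\mu^*$. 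Using $(AV1)$ once more (as in Theorem~3 of \cite{ramaswamy2017}, where $\lVert TJ - J \rVert_\nu$ plays the role of a strict Lyapunov-type functional outside this set) one shows the internally chain transitive invariant set collapses to a single point of $\{J : \lVert TJ - J \rVert_\nu \le d\epsilon\}$, which is the claim.

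I expect the third paragraph to be the main obstacle: converting the set-valued ``internally chain transitive invariant set'' output of Theorem~\ref{projective_main_1} into genuine convergence to a point with the sharp radius $d\epsilon$ forces one to use the contraction quantitatively rather than merely qualitatively, and the per-agent update frequency $1/d$ must be tracked carefully. The $(S5)$ estimate is the secondary difficulty, since one cannot lean on nonexpansiveness of $\mathcal{A}T$ and must instead carry the $\epsilon$-perturbation of $T$ through the telescoping sum.
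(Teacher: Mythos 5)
Your proposal is correct and follows essentially the same route as the paper: cast $f = T - I$, replace $\overline{B}_\epsilon(0)$ by the convex compact $B^\epsilon$, get $(S4a)$ from $(AV2)$, verify $(S5)$ via the projective counterpart and the contraction bound $\lVert J_n - \hat J_n\rVert_\nu \le \lVert J_N - \hat J_N\rVert_\nu \vee \tfrac{2\epsilon}{1-\alpha}$, then invoke the stability theorem and identify the limiting set with the equilibria of $\mu^*(T - I) + B^\epsilon$, i.e.\ $\{J : \lVert TJ - J\rVert_\nu \le d\epsilon\}$. The only cosmetic difference is in the final step, where the paper cites \emph{Theorem 2, Chapter 6} of \cite{aubin2012differential} for convergence of DI solutions to equilibria while you sketch a Lyapunov-type argument with $\lVert TJ - J\rVert_\nu$; both accomplish the same thing.
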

\begin{proof}
 From the above discussion, it is clear that A2VI is bounded a.s. (stable). Since balanced step-sizes are used, to study the long-term
 behavior of A2VI one needs to study
 $\dot{J}(t) \in \mu^* ((TJ)(t) - J(t)) + B^\epsilon$, where $\mu^* = diag(1/d, \ldots, 1/d)$. 
 It follows from \textit{Theorem 2}
 of \textit{Chapter 6} in \cite{aubin2012differential} that any solution to the aforementioned DI will converge to an equilibrium point of
 $T(\cdotp) + B^{d \epsilon}$, where $B^{d \epsilon} :=  \{d x \mid x \in B^\epsilon \}$. This is because
 $\dot{J}(t) \in \mu^* ((TJ)(t) - J(t) + B^{d \epsilon})$ and $\dot{J}(t) \in TJ(t) - J(t) + B^{d \epsilon}$ are qualitatively similar and
 only differ in scale. The equilibrium points of $T + B^{d \epsilon}$ are given by $\left \{J \mid \lVert TJ - J \rVert_\nu \le d \epsilon \right\}$.
 For more details the reader is referred to \textit{Section 7} of \cite{ramaswamy2017}.
\end{proof}
We have shown that A2VI is stable as long as the approximation errors are asymptotically bounded. We do not distinguish between biased and unbiased errors.
Further, we show that A2VI converges to a fixed point of a scaling of the perturbed Bellman operator $\mu^* TJ + B^\epsilon$.
%%%%%%%%%%%%%%%%%%%%%%%%%%%%%%%%%%%%%%%%%%%%%%%%%%%%%%%%%%%%%%%%%%%%%%%%%%%%%%%%%%%%%%%%%%%%%%%%%%%%%%%%%%%%%%%%%%%%%%%%%%%%%%%%%%%%%%%%%%%%%%%%
%%%%%%%%%%%%%%%%%%%%%%%%%%%%%%%%%%%%%%%%%%%%%%%%%%%%%%%%%%%%%%%%%%%%%%%%%%%%%%%%%%%%%%%%%%%%%%%%%%%%%%%%%%%%%%%%%%%%%%%%%%%%%%%%%%%%%%%%%%%%%%%%
\subsection{Asynchronous approximate policy gradient iteration (A2PG)} \label{sec_a2pi}
Policy gradient method is an important reinforcement learning algorithm developed by Sutton \textit{et al.}, in 2000 \cite{sutton2000}. 
This method relies on a parameterization of the policy space, say $\pi(\theta)$. This parameterization is typically through the use of a deep
neural network. 
Once a parameterization is determined, one seeks out a local minimizer
$\hat{\theta}$ in the parameter space, in order to find the optimal policy. However, 
there are several situations wherein one either cannot calculate or does not wish to calculate
the exact gradient $\nabla_\theta \pi(\theta_n)$ at every stage. This could be due to the use of a non-differentiable activation function or it
could be a consequence of using gradient estimators such as 
$SPSA$-$C$ \cite{ramaswamy2017analysis} (simultaneous perturbation stochastic approximation
with a constant sensitivity parameter) or other finite difference methods.
In these cases, one has to deal with a policy gradient scheme with non-diminishing approximation errors. 
In the present work, we are interested in policy gradient methods within the setting
of large-scale distributed systems. A general form of approximate policy gradient methods which satisfy all these conditions is given below:
\begin{equation}
\label{a2pi_a2pi}
\begin{split}
 &\theta_{n+1}(i) = \theta_n (i)  - a(\nu(i,n)) I\{ i \in Y_n\} \\ 
 &\left( (\mathcal{A}\nabla_\theta \pi)_i(\theta_{n - \tau_{1i}(n)}(1), \ldots, 
\theta_{n - \tau_{di}(n)}(d))
   + M_{n+1}(i) \right).
   \end{split}
\end{equation}
We call the above scheme as asynchronous approximate policy gradient iteration or A2PG.
As in Section~\ref{sec_a2vi}, we can impose natural conditions on the gradient ($\nabla \pi(\cdotp)$), 
the noise and other parameters of (\ref{a2pi_a2pi}).
Suppose the approximation errors are asymptotically bounded, then one can show that the iterates converge to a neighborhood of some
local minimizer $\hat{\theta}$. Further, this neighborhood is a function of the approximation errors.
For details on the relationship between the neighborhood and approximation errors, the reader is referred to \cite{ramaswamy2017analysis}.
%%%%%%%%%%%%%%%%%%%%%%%%%%%%%%%%%%%%%%%%%%%%
%%%%%%%%%%%%%%%%%%%%%%%%%%%%%%%%%%%%%%%%%%%%
\subsection{Experimental results} \label{sec_exp}
In this section, we consider an asynchronous algorithm (given by eq. \eqref{asmp_aasaa}) to find the minimum of $F: \mathbb{R}^d \to \mathbb{R}^d$, where $d \ge 2$. The function $F$ is defined as $F(x_1, \ldots, x_d) :=$ $\left(F_1(x_1, \ldots, x_d), \ldots,  F_d(x_1, \ldots, x_d)\right)$, where $F_1, \ldots F_d: \mathbb{R}^d \to \mathbb{R}$.

\noindent
{\bf [Experimental set-up]} For better exposition, we consider an iteration in dimension $2$, i.e., $d = 2$. The function $F$ is defined as follows: $F_1(x) := \frac{1}{2} (x^{ T}Ax) (1)$, $F_2(x) := \frac{1}{2} (x^{ T}Bx) (2)$ and $F(x) := (F_1(x), F_2(x))$. The matrices $A$ and $B$ are randomly constructed positive definite matrices of dimension $2 \times 2$. A random error vector of norm less than $\epsilon \neq 0$ is added to the gradient at every step. Each component of this error vector is independent and uniformly distributed in $[0, \epsilon/ 2]$. It may be noted that $\nabla _x F_1(x) = Ax$ and $\nabla _x F_2(x) = Bx$ for $x \in \mathbb{R}^d$.

\noindent
Agent-1 runs the following:
\[
x_{n+1}(1) = x_n (1) - a(n) \left[ A \begin{bmatrix} x_n(1) \\ x_{n - \tau_{2,1}}(2) \end{bmatrix}(1)  + \epsilon_1 \right],
\]
while agent-2 runs the following:
\[
x_{n+1}(2) = x_n (2) - a(n) \left[ B \begin{bmatrix} x_{n - \tau_{2,1}}(1) \\ x_n(2) \end{bmatrix}(2)  + \epsilon_2 \right].
\]
The above distributed algorithm was run for $1000$ iterations using the step-size seqeunce $\{\nicefrac{1}{(n+10)}\} _{n=1} ^{1000}$. Since the matrices $A$ and $B$ are positive definite, we expect the limit to be the origin. 

At every step the two agents exchange (state) information with probability $p_c$, and with probability $1 - p_c$ the agents use old (state) information. In other words, $p_c$ represents the communication probability in our experiments. Note that we have used symmetric delays for simplicity and the experiments can be easily repeated with asymmetric delays.

Results from the experiments are summarized in the two figures below. In both the figures, $\lVert (\epsilon_1, \epsilon_2) \rVert$ is plotted along $x$-axis and $\log( \lVert x_{1000} \rVert)$ is plotted along $y$-axis. In other words, any point in the plot represents $(\lVert (\epsilon_1, \epsilon_2) \rVert,\ \log( \lVert x_{1000} \rVert))$. Each figure has five differently colored line graphs to represent the five sample runs of the algorithm. For each sample run, the parameters $(x_1(1), x_1(2))$ (initial point) and the matrices $A$ and $B$ are randomly chosen, and the norm bound on additive errors $(\epsilon_1$, $\epsilon_2)$ is varied from $0.2$ to $3$ in steps of 0.1. Fig. \ref{fig05} illustrates all the experiments with $p_c = 0.4$, and Fig. \ref{fig08} illustrates all the experiments with $p_c = 0.8$.
\begin{figure}[H]
\centering
\includegraphics[width=2.5in]{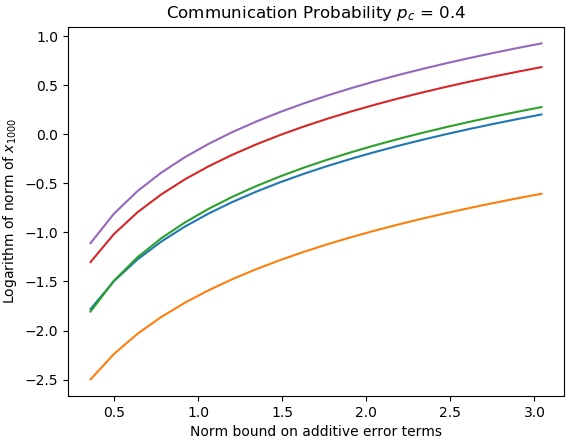}
\caption{Five random sample runs with $p_c = 0.4.$ $\lVert (\epsilon_1, \epsilon_2) \rVert$ is plotted along $x$-axis and $\log( \lVert x_{1000} \rVert)$ is plotted along $y$-axis.}
\label{fig05}
\end{figure}
\begin{figure}[H]
\centering
\includegraphics[width=2.5in]{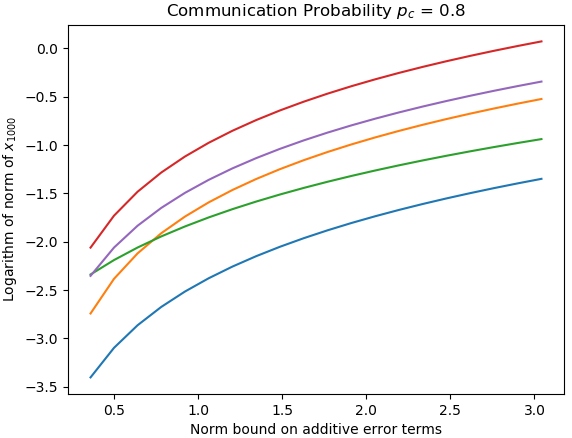}
\caption{Five random sample runs with $p_c = 0.8.$ $\lVert (\epsilon_1, \epsilon_2) \rVert$ is plotted along $x$-axis and $\log( \lVert x_{1000} \rVert)$ is plotted along $y$-axis.}
\label{fig08}
\end{figure}
In Figures \ref{fig05} and \ref{fig08}, the agents exchange data $40\%$ and $80\%$ of the time, respectively. It can be seen that the algorithm converged farther from the origin when the additive errors are larger. When $p_c = 0.8$, the algorithm converged to a point close to the origin, even for large additive errors, as compared to $p_c = 0.4$. {\bf The experiments seem to suggest that frequent communications should be used to counter the effect of biased additive errors.}
%%%%%%%%%%%%%%%%%%%%%%%%%%%%%%%%%%%%%%%%%%%%%%%%%%%%%%%%%%%%%%%%%%%%%%%%%%%%%%%%%%%%%%%%%%%%%%%%%%%%%%%%%%%%%%%%%%%%%%%%%%%%%%%%%%%%%%%%%%%%%%%%
\section{Verifiability of assumption $(S5)$} \label{sec_verify}
In this section, we address the verifiability of assumption $(S5)$. We do not discuss other assumptions, since they deal with the objective function, step-sizes or noise, in a manner that is standard to literature. However, to ensure $(S5)$, one needs to compare the algorithm iterates with a projective scheme. Further, the experimenter is typically uninterested in the projective scheme itself. In this section, we show that $(S5)$ is satisfied for fixed point finding algorithms such as A2VI, provided the objective function is non-expansive.

Recall A2VI and its projective counterpart:
\begin{equation} \label{verify_iterate}
\begin{split}
J_{n+1} &= J_n + a(n) D_n \left[ TJ_n - J_n + \epsilon_n \right] ,\\
\hat{J}_{n+1} &\in \pro_{\mathcal{B}, \mathcal{C}} \left(\hat{J}_n + a(n) D_n \left[ T\hat{J}_n - J_n + \hat{\epsilon}_n \right] \right).
\end{split}
\end{equation}
Unlike in Section $\ref{sec_a2vi}$, we assume here that $T$ is non-expansive with respect to some norm $p$, i.e., $p(Tx - Ty) \le p(x - y)$. For all $x, y$ it follows from Lemma \ref{projective_gis0_2} that the projective scheme converges to $\mathbb{A}$ almost surely. In other words, there exists a sample path dependent $N$ such that $\{\hat{J}_n\}_{n \ge N} \subseteq \mathcal{C}$ a.s. Further, $\hat{J}_{n+1} = \hat{J}_n + a(n) D_n \left[ T\hat{J}_n + \hat{\epsilon}_n \right]$ for all $n \ge N$. For $n \ge N$, first, we take the difference between the two iterations in \eqref{verify_iterate}. Then, we take the norms on both sides, to get the following:
\begin{equation*}
\begin{split}
p(J_{n +1} - \hat{J}_{n +1}) &\le (1 - a(n)) p(J_n  - \hat{J}_n) \\ &+ a(n) p (TJ_n - T\hat{J}_n) + a(n) p(\epsilon_n - \hat{\epsilon}_n)
\end{split}
\end{equation*}
Since $T$ is non-expansive we get:
\[
p(J_{n +1} - \hat{J}_{n +1}) \le p(J_{n} - \hat{J}_{n}) + a(n) p(\epsilon_n - \hat{\epsilon}_n).
\]
For $k \ge 1$, we have:
\[
p(J_{N +k} - \hat{J}_{N +k}) \le p(J_{N} - \hat{J}_{N}) + \sum \limits_{n=N}^{N + k-1}a(n) p(\epsilon_{n} - \hat{\epsilon}_n).
\]
As long as $p(\epsilon_{n} - \hat{\epsilon}_n) \in o(a(n))$ ($o(\cdotp)$, we get:
\[
p(J_{N +k} - \hat{J}_{N +k}) \le p(J_{N} - \hat{J}_{N}) + \sum \limits_{n=0}^{\infty}a(n)^2 < \infty.
\]
It may be noted that many important RL and MDP algorithms such as Q-learning and Value Iteration are fixed point finding algorithms. In \cite{abounadi}, the objective function of Q-learning is shown to be non-expansive. To summarize, the above set of arguments can be used to verify S(5) for approximate asynchronous fixed point finding algorithms with non-expansive objective functions.
%%%%%%%%%%%%%%%%%%%%%%%%%%%%%%%%%%%%%%%%%%%%%%%%%%%%%%%%%%%%%%%%%%%%%%%%%%%%%%%%%%%%%%%%%%%%%%%%%%%%%%%%%%%%%%%%%%%%%%%%%%%%%%%%%%%%%%%%%%%%%%%%
%%%%%%%%%%%%%%%%%%%%%%%%%%%%%%%%%%%%%%%%%%%%%%%%%%%%%%%%%%%%%%%%%%%%%%%%%%%%%%%%%%%%%%%%%%%%%%%%%%%%%%%%%%%%%%%%%%%%%%%%%%%%%%%%%%%%%%%%%%%%%%%%
%%%%%%%%%%%%%%%%%%%%%%%%%%%%%%%%%%%%%%%%%%%%%%%%%%%%%%%%%%%%%%%%%%%%%%%%%%%%%%%%%%%%%%%%%%%%%%%%%%%%%%%%%%%%%%%%%%%%%%%%%%%%%%%%%%%%%%%%%%%%%%%%
\section{Summary of our contributions and conclusions} \label{sec_conclusion}
In this paper, we considered a natural extension of asynchronous stochastic approximation algorithms that accommodates the use of function approximations.
For this purpose, we considered asynchronous stochastic approximations 
with asymptotically bounded, and possibly biased, approximation errors.
The assumptions and the analyses presented are motivated by the need to understand the current crop of deep reinforcement learning algorithms.
We are particularly interested in these algorithms when used within the setting of multi-agent learning and control.

Our framework allows for complete asynchronicity in that each agent is
guided by its own local clock. 
Although the agents are fully asynchronous, we require that the agents are updated,
roughly, the same number of times, in the long run.
 Our framework can be used to
analyze asynchronous approximate value iteration (A2VI).
A2VI is an adaptation of regular value iteration with noise to the setting of large-scale multi-agent learning and control.
Here, we showed that 
A2VI converges to a fixed point of the perturbed Bellman operator when balanced step-sizes are used. We also established a relationship between these fixed points
and the approximation errors. Note that the use of function approximators required us to consider the perturbed Bellman operator.
We further analyzed a similar adaptation, A2PG, of the classical policy gradient iteration to the multi-agent setting.
 We briefly discussed how A2PG converges to a small neighborhood of local minima of the parameterized policy function.
Again, this neighborhood is directly related to the approximation errors.

 \textit{An important consequence of our theory is the following: stability of the aforementioned algorithms remains unaffected when the 
approximation errors are asymptotically bounded,
although possibly biased. Since a function approximator (eg. DNN) is continuously trained, it is reasonable to expect the errors to diminish asymptotically, even
though they may not vanish completely.}
\textit{It is worth noting that ours is one of the first theoretical results that can be used to understand
the long-term behavior of deep reinforcement learning algorithms within the setting of multi-agent learning and control.}

 In the future, we would like to make a two-fold extension to our analysis: (i) Allow for multiple timescales and (ii) allow for objective functions that 
are driven by controlled Markov processes. This will help us analyze other popular algorithms such as Deep Q-Network, deep temporal difference learning and deep deterministic policy gradient (a popular actor-critic algorithm).
When implementing DeepRL algorithms in an online setting, the learning rate is generally fixed. To this end, we would also wish to explore one and two timescale algorithms with
constant step-sizes and function approximations.% Specifically, we showed that the algorithm is stable
% (bounded almost surely) provided the approximation errors due to deep function approximators are asymptotically bounded. 
%%%%%%%%%%%%%%%%%%%%%%%%%%%%%%%%%%%%%%%%%%%%%%%%%%%%%%%%%%%%%%%%%%%%%%%%%%%%%%%%%%%%%%%%%%%%%%%%%%%%%%%%%%%%%%%%%%%%%%%%%%%%%%%%%%%%%%%%%%%%%%%%
%%%%%%%%%%%%%%%%%%%%%%%%%%%%%%%%%%%%%%%%%%%%%%%%%%%%%%%%%%%%%%%%%%%%%%%%%%%%%%%%%%%%%%%%%%%%%%%%%%%%%%%%%%%%%%%%%%%%%%%%%%%%%%%%%%%%%%%%%%%%%%%%
%%%%%%%%%%%%%%%%%%%%%%%%%%%%%%%%%%%%%%%%%%%%%%%%%%%%%%%%%%%%%%%%%%%%%%%%%%%%%%%%%%%%%%%%%%%%%%%%%%%%%%%%%%%%%%%%%%%%%%%%%%%%%%%%%%%%%%%%%%%%%%%%
%%%%%%%%%%%%%%%%%%%%%%%%%%%%%%%%%%%%%%%%%%%%%%%%%%%%%%%%%%%%%%%%%%%%%%%%%%%%%%%%%%%%%%%%%%%%%%%%%%%%%%%%%%%%%%%%%%%%%%%%%%%%%%%%%%%%%%%%%%%%%%%%
%%%%%%%%%%%%%%%%%%%%%%%%%%%%%%%%%%%%%%%%%%%%%%%%%%%%%%%%%%%%%%%%%%%%%%%%%%%%%%%%%%%%%%%%%%%%%%%%%%%%%%%%%%%%%%%%%%%%%%%%%%%%%%%%%%%%%%%%%%%%%%%%
%%%%%%%%%%%%%%%%%%%%%%%%%%%%%%%%%%%%%%%%%%%%%%%%%%%%%%%%%%%%%%%%%%%%%%%%%%%%%%%%%%%%%%%%%%%%%%%%%%%%%%%%%%%%%%%%%%%%%%%%%%%%%%%%%%%%%%%%%%%%%%%%
%%%%%%%%%%%%%%%%%%%%%%%%%%%%%%%%%%%%%%%%%%%%%%%%%%%%%%%%%%%%%%%%%%%%%%%%%%%%%%%%%%%%%%%%%%%%%%%%%%%%%%%%%%%%%%%%%%%%%%%%%%%%%%%%%%%%%%%%%%%%%%%%
\bibliographystyle{plain}
\bibliography{AAVI}

\begin{thebibliography}{10}

\bibitem{abounadi}
J.~Abounadi, D.P. Bertsekas, and V.~Borkar.
\newblock Stochastic approximation for nonexpansive maps: Application to
  q-learning algorithms.
\newblock {\em SIAM Journal on Control and Optimization}, 41(1):1--22, 2002.

\bibitem{aubin2012differential}
J-P. Aubin and A.~Cellina.
\newblock {\em Differential inclusions: set-valued maps and viability theory},
  volume 264.
\newblock Springer Science \& Business Media, 2012.

\bibitem{Benaim96}
M.~Bena{\" i}m.
\newblock A dynamical system approach to stochastic approximations.
\newblock {\em SIAM J. Control Optim.}, 34(2):437--472, 1996.

\bibitem{BenaimHirsch}
M.~Bena{\"i}m and M.~W. Hirsch.
\newblock Asymptotic pseudotrajectories and chain recurrent flows, with
  applications.
\newblock {\em J. Dynam. Differential Equations}, 8:141--176, 1996.

\bibitem{Benaim05}
M.~Bena\"{i}m, J.~Hofbauer, and S.~Sorin.
\newblock Stochastic approximations and differential inclusions.
\newblock {\em SIAM Journal on Control and Optimization}, pages 328--348, 2005.

\bibitem{benaim2006stochastic}
M.~Bena{\"\i}m, J.~Hofbauer, and S.~Sorin.
\newblock Stochastic approximations and differential inclusions, part ii:
  Applications.
\newblock {\em Mathematics of Operations Research}, 31(4):673--695, 2006.

\bibitem{BertsekasBook}
D.P. Bertsekas and J.N. Tsitsiklis.
\newblock {\em Neuro-Dynamic Programming}.
\newblock Athena Scientific, 1st edition, 1996.

\bibitem{Bhatnagar}
S.~Bhatnagar.
\newblock The {Borkar-Meyn} theorem for asynchronous stochastic approximations.
\newblock {\em Systems {\&} Control Letters}, 60(7):472--478, 2011.

\bibitem{bianchi2019constant}
P.~Bianchi, W.~Hachem, and A.~Salim.
\newblock Constant step stochastic approximations involving differential
  inclusions: Stability, long-run convergence and applications.
\newblock {\em Stochastics}, 91(2):288--320, 2019.

\bibitem{Billingsley}
P.~Billingsley.
\newblock {\em Convergence of probability measures}.
\newblock John Wiley \& Sons, 2013.

\bibitem{BorkarBook}
V.~S. Borkar.
\newblock {\em Stochastic Approximation: A Dynamical Systems Viewpoint}.
\newblock Cambridge University Press, 2008.

\bibitem{Borkar99}
V.~S. Borkar and S.P. Meyn.
\newblock The {O.D.E.} method for convergence of stochastic approximation and
  reinforcement learning.
\newblock {\em SIAM J. Control Optim}, 38:447--469, 1999.

\bibitem{Borkar_asynchronous}
V.S. Borkar.
\newblock Asynchronous stochastic approximations.
\newblock {\em SIAM Journal on Control and Optimization}, 36(3):840--851, 1998.

\bibitem{KushnerYin}
H.~Kushner and G.G. Yin.
\newblock {\em Stochastic Approximation and Recursive Algorithms and
  Applications}.
\newblock Springer, 2003.

\bibitem{li17}
Y.~Li.
\newblock Deep reinforcement learning: An overview.
\newblock {\em arXiv preprint arXiv:1701.07274}, 2017.

\bibitem{mnih}
V.~Mnih, K.~Kavukcuoglu, D.~Silver, A.A. Rusu, J.~Veness, M.G. Bellemare,
  A.~Graves, M.~Riedmiller, A.K. Fidjeland, G.~Ostrovski, et~al.
\newblock Human-level control through deep reinforcement learning.
\newblock {\em Nature}, 518(7540):529, 2015.

\bibitem{mniha}
Volodymyr Mnih, Adria~Puigdomenech Badia, Mehdi Mirza, Alex Graves, Timothy
  Lillicrap, Tim Harley, David Silver, and Koray Kavukcuoglu.
\newblock Asynchronous methods for deep reinforcement learning.
\newblock In {\em International Conference on Machine Learning}, pages
  1928--1937, 2016.

\bibitem{munos03}
R.~Munos.
\newblock Error bounds for approximate policy iteration.
\newblock In {\em ICML}, volume~3, pages 560--567, 2003.

\bibitem{munos}
R.~Munos.
\newblock Error bounds for approximate value iteration.
\newblock In {\em Proceedings of the National Conference on Artificial
  Intelligence}, volume~20, page 1006, 2005.

\bibitem{perkins}
S.~Perkins and D.S. Leslie.
\newblock Asynchronous stochastic approximation with differential inclusions.
\newblock {\em Stochastic Systems}, 2(2):409--446, 2013.

\bibitem{ramaswamy2017analysis}
A.~Ramaswamy and S.~Bhatnagar.
\newblock Analysis of gradient descent methods with non-diminishing, bounded
  errors.
\newblock {\em IEEE Transactions on Automatic Control}, 2017.
\newblock doi:10.1109/TAC.2017.2744598.

\bibitem{ramaswamy2017}
A.~Ramaswamy and S.~Bhatnagar.
\newblock Conditions for stability and convergence of set-valued stochastic
  approximations: Applications to approximate value and fixed point iterations.
\newblock {\em arXiv preprint arXiv:1709.04673}, 2017.

\bibitem{Ramaswamy}
A.~Ramaswamy and S.~Bhatnagar.
\newblock A generalization of the {Borkar-Meyn} theorem for stochastic
  recursive inclusions.
\newblock {\em Mathematics of Operations Research}, 42(3):648--661, 2017.

\bibitem{robbins}
H.~Robbins and S.~Monro.
\newblock A stochastic approximation method.
\newblock {\em The annals of mathematical statistics}, pages 400--407, 1951.

\bibitem{sutton2000}
R.S. Sutton, D.~A. McAllester, S.P. Singh, and Y.~Mansour.
\newblock Policy gradient methods for reinforcement learning with function
  approximation.
\newblock In {\em Advances in neural information processing systems}, pages
  1057--1063, 2000.

\bibitem{tamar}
A.~Tamar, Y.~Wu, G.~Thomas, S.~Levine, and P.~Abbeel.
\newblock Value iteration networks.
\newblock In {\em Advances in Neural Information Processing Systems}, pages
  2154--2162, 2016.

\end{thebibliography}
\end{document}